\newcommand{\etalchar}[1]{$^{#1}$}
\def\MRnum#1\empty{#1}
\renewcommand{\MRhref}[2]{%
  \href{http://www.ams.org/mathscinet-getitem?mr=#1}{#2}
}
\renewcommand{\MR}[1]{
  \relax\ifhmode\unskip\space\fi
  \MRhref{\MRnum#1\empty}{\texttt{\Tiny[MR\MRnum#1\empty]}}
}
\newcommand{\arxiv}[1]{\href{http://arxiv.org/abs/#1}{\scriptsize arXiv: #1}}
\DeclareMathOperator{\Div}{\nabla\cdot}
\DeclareMathOperator{\sgn}{sgn}
\DeclareMathOperator{\supp}{Supp}
\newcommand{\e}{\operatorname{e}}
\newcommand{\R}{\mathbb{R}}
\newcommand{\Torus}{{\mathbb{T}_2}}
\newcommand{\uno}{\mathbbm{1}}
\newcommand{\Prob}{\mathbb{P}}
\newcommand{\eqdef}{\vcentcolon=}
\newcommand{\qedef}{=\vcentcolon}
\newcommand{\scalar}[1]{\langle #1 \rangle}
\newcommand{\bd}[1]{|#1|_\bullet}
\newcommand{\memo}[1]{
  \framebox{\tiny\text{\kern-2pt\textsf{\ensuremath{#1}}}\kern-2pt}
  \xspace
}
\newtheorem{theorem}{Theorem}[section]
\newtheorem{lemma}[theorem]{Lemma}
\newtheorem{proposition}[theorem]{Proposition}
\newtheorem{corollary}[theorem]{Corollary}
\theoremstyle{definition}
\newtheorem{definition}[theorem]{Definition}
\theoremstyle{remark}
\newtheorem{remark}[theorem]{Remark}
\numberwithin{equation}{section}
\begin{document}
  \title[Point vortices for inviscid gSQG]
    {Point vortices for inviscid generalized surface quasi-geostrophic models}
  \author[C. Geldhauser]{Carina Geldhauser}
    \address{Faculty of Mathematics, Technische Universit\"at Dresden, 01062 Dresden, Germany}
    \email{\href{mailto:carina.geldhauser@tu-dresden.de}{carina.geldhauser@tu-dresden.de}}
    \urladdr{\url{http://www.cgeldhauser.de}}
  \author[M. Romito]{Marco Romito}
    \address{Dipartimento di Matematica, Universit\`a di Pisa, Largo Bruno Pontecorvo 5, I--56127 Pisa, Italia }
    \email{\href{mailto:marco.romito@unipi.it}{marco.romito@unipi.it}}
    \urladdr{\url{http://people.dm.unipi.it/romito}}
  \date{December 12, 2018}
  \thanks{The second author acknowledges the partial support of the University of Pisa, through project PRA 2018\_49}
  \begin{abstract}
    We give a rigorous proof of the validity of the point
    vortex description for a class of inviscid generalized
    surface quasi-geostrophic models on the whole plane.
  \end{abstract}
\maketitle
\section{Introduction}

The main aim of the paper is to give a rigorous proof
of the validity of the point vortex description for
a class of inviscid generalized surface quasi-geo\-stro\-phic
(briefly, gSQG) models. This extends the connections,
well known in the case of Euler equations
\cite{MarPul1993,MarPul1994},
between the point vortex theory and these models.

We deal with the following class of problems
on $\R^2$,
\begin{equation}\label{e:gSQG}
  \begin{cases}
    \partial_t\theta + u\cdot\nabla\theta
      = 0,\\
    (-\Delta)^{\frac{m}2}\psi
      = \theta,\\
    u
      = \nabla^\perp\psi.
\end{cases}
\end{equation}
The case $m=2$ corresponds to the Euler equations,
the case $m=1$ corresponds to the inviscid surface
quasi-geostrophic equations (SQG). In meteorology
the inviscid SQG has been derived to model
the production of fronts due to the tightening
of temperature gradients, see
\cite{ConMajTab1994,HelPieSwa1994,HelPieGarSwa1995},
see also \cite{CorFefRod2004,Rod2005} for the
first mathematical and geophysical studies on the
subject. The generalized version of the model
examined in this paper bridges
the cases of Euler and SQG and
shares a series of common physical features
namely the emergence of inverse cascades
\cite{Sch2000,Tra2004,TraDriSco2010,VenDauRuf2015},
as well as deeper universal invariance
properties \cite{BerBofCelFal2006,BerBofCelFal2007,Fal2009}.
In this paper we will consider the cases
$m\in(1,2)$.

From the mathematical point of view the generalized
models share the same difficulties of SQG.
Local existence and uniqueness holds
for smooth enough initial data, see for
instance \cite{ChaConCorGanWu2012}.
It is not known if the generalized SQG,
including the case $m=1$, has a global solution.
There is numerical evidence \cite{CorFonManRod2005}
of emergence of singularities in the generalized
SQG, for $m\in[1,2)$, as well as global
stable solutions \cite{CorGomIon2017}.
Regularity criteria are known, see
\cite{ChaConWu2011}. Weak solutions
are known for SQG in $L^2$ \cite{Res1995}
and $L^p$, with $p>\tfrac43$
\cite{Mar2008}, see
\cite{ChaConCorGanWu2012}
for weak solution in $L^2$ on
the torus for $m<1$. We
will give our version of
weak solutions in
Section~\ref{s:weak}.

Point vortices for \eqref{e:gSQG}
represent profiles that are sharply
concentrated around some points.
Formally \eqref{e:gSQG} is a transport
equation, so we may believe that
an initial profile given as
the configuration of $N$ points,
\begin{equation}\label{e:ic}
  \theta(0)
    = \sum_{j=1}^N \gamma_j\delta_{x_j},
\end{equation}
where $\gamma_1,\gamma_2,\dots,\gamma_N$,
are given numbers (that we will call
the \emph{intensities} of the point
vortices), evolves as a measure of
the same kind, with constant
intensities (a generalized version
of the conservation of circulation)
and where the positions
evolve according to the system
of equations
\begin{equation}\label{e:motion}
  \begin{cases}
    \dot X_j
      = \sum_{k\neq j}\gamma_k\nabla^\perp G_m(X_j,X_k),\\
    X_j(0)
      = x_j,
  \end{cases}
  \qquad j=1,2,\dots,N,
\end{equation}
where $G_m$ is the Green function
of the fractional Laplacian
$(-\Delta)^{\frac{m}2}$ on $\R^2$,
\begin{equation}\label{e:green}
  G_m(x,y)
    = G_m(x-y)
    = \frac{\Gamma(\tfrac{2-m}{2})}
      {2^{m/2}\pi|\Gamma(\frac{m}{2})|}
      |x-y|^{m-2}.
\end{equation}
In our first main result (Theorem~\ref{t:existence})
we prove that
the above system \eqref{e:motion} has,
for fixed $N$, a global solution for
{a.\,e.} initial condition, under a
generic (and necessary) assumption
on the intensities.

Additionally, in our second main
result (Theorem~\ref{t:approximate})
we prove that
point vortices provide
an approximation of solutions to
\eqref{e:gSQG}, namely if an initial
condition is approximated, in the sense
of measures, by point vortices
\eqref{e:ic} as $N\uparrow\infty$, then
solutions to \eqref{e:gSQG} are
approximated, again in the sense
of measures, by the evolution
of the point vortex measure
\[
  \sum_{j=1}^N \gamma_j\delta_{X_j(t)}.
\]
Unfortunately, again due to the
singularity of the kernel
$\nabla^\perp G_m$, the evolution
of vortices corresponds to
a regularization of the
original dynamics. The
regularized kernel converges
though to the original kernel
as $N\uparrow\infty$
(see Remark~\ref{r:fail}
for additional considerations).

For measure valued solution, one should
interpret \eqref{e:gSQG} in the sense
of distributions. But, as in the case
of Euler equations ($m=2$), this is not
enough to include measures with atoms.
In the case of Euler equations
a symmetrisation \cite{Del1991}
(see also \cite{Sch1995,Sch1996})
allows to tame the singularity
of the Biot-Savart kernel. In
this context, writing the equation
against a test function $\varphi$
only in terms of $\theta$ yields
\[
  \begin{multlined}[.9\linewidth]
  \int\int\theta(t,x)\varphi(t,x)\,dx\,dt + {}\\
      + \int\iint k_m(x-y)\cdot(\nabla\varphi(t,x)-\nabla\varphi(t,y))
      \theta(t,x)\theta(t,y)\,dx\,dy\,dt
    = 0,
  \end{multlined}
\]
where $k_m=\nabla^\perp G_m$.
Unfortunately, in this more singular
setting, the new kernel
$k_m(x-y)\cdot(\nabla\varphi(t,x)-\nabla\varphi(t,y))$
is not bounded on the diagonal,
and there is no hope to give a meaning
to solutions to \eqref{e:gSQG} with
point masses.
Nevertheless, in our third main result
(Theorem~\ref{t:main}) we are
able to prove that for values of the
parameter $m$ not too small
($\sqrt{3}<m<2$), a sequence
of vortex blobs solutions to
\eqref{e:gSQG} converges,
as the size of the blobs
goes to $0$, to the configuration
of point masses that obeys
to \eqref{e:motion}.

The intuitive reason, valid
for Euler \cite{MarPul1994} but
crucial in this setting, is that
a single vortex does not move
subject to the self-generated
velocity field, but only
according to the velocity
field generated by
all other vortices.
The singular self-interaction,
absent in \eqref{e:motion},
does not play a role,
although it should due to
singularity of the
kernel $k_m$,
at the level of the
equation. In rigorous
terms, we prove localisation
of vortices (Proposition~\ref{p:singleblob}),
namely, if $\theta$ is
initially a vortex blob,
then it remains a vortex
blob of comparable size.
Our proof of localisation
fails when $m\leq\sqrt 3$,
but it may be a technical
issue of the method used
and it is not clear if
the main theorem about
convergence of vortex blobs
to point vortices fails.

We conclude with
a few additional comments.
The first is that the
extension of these
results to the torus
is straightforward, due
to the absence of boundaries.
In the presence of boundaries
the problem is more delicate.
We wish also to emphasise
the possible connection
with the evolution of
vortex patches,
namely solutions that
take only two values,
and where the main interest
is about the evolution of
the interface. See for
instance \cite{ChaConCorGanWu2012,
KisRyzYaoZla2016,CorGomIon2017}
for relevant results.
Finally we remark that
the validation of the
point vortex motion
proved here bolsters
the statistical
mechanics of
point vortices
discussed in \cite{GelRom2018p},
where the authors
extend results on Euler equations
from \cite{CagLioMarPul1992,CagLioMarPul1995,Lio1998,BodGui1999}.
\subsection*{Contents}
The paper is organized as follows.
In Section~\ref{s:weak} we prove
existence of weak solutions with
initial conditions in $L^1(\R^2)\cap L^\infty(\R^2)$
(Theorem~\ref{t:weak}),
since this is the class for
vortex blobs. In principle, following
\cite{Mar2008}, one could do better.
For instance, if $\theta\in L^p(\R^2)$
and $p<\frac2{m-1}$, by the
Hardy-Littlewood-Sobolev inequality
$u=k_m\star\theta\in L^q(\R^2)$
with $\frac1q=\frac1p-\frac12(m-1)$
and $\theta\,u\in L^r(\R^2)$
for some $r$ if $p\geq4{m+1}$.
The assumption $L^1(\R^2)\cap L^\infty(\R^2)$
for the initial condition greatly
simplifies the existence theorem.
We point out though that,
using probabilistic techniques,
\cite{FlaSaa2018} are able
to solve the equation with
initial conditions in
a space of much rougher functions.

In Section~\ref{s:pointmotion}
we prove that under suitable
generic assumptions on the
intensities, the point
vortex motion \eqref{e:motion}
has a global non-colliding
solution for {a.\,e.} initial
condition (Theorem~\ref{t:existence}).
The approximation of solutions
of \eqref{e:gSQG} is proved
in Theorem~\ref{t:approximate}.
With a well defined motion at hand,
we are finally able to prove the main result
about convergence
of vortex blobs to the point
vortex motion (Theorem~\ref{t:main})
using localisation (Proposition~\ref{p:singleblob}).
\subsection*{Notations}

First of all, we will name
\emph{pseudo-vorticity}
the term $\theta$ in \eqref{e:gSQG},
in analogy
with the Euler case $m=2$,
even though this may be inappropriate
for instance in the context of
SQG, where $\theta$ is a temperature.

We will denote by
$B_r(x)$ the ball centred
at $x$ with radius $r$, by
$\delta_x$ the measure
concentrated at a point
$x\in\R^2$,
by $\star$ the convolution
product,
by $\nabla^\perp$
the vector
$\nabla^\perp=(-\partial_{x_2},\partial_{x_1})$.
We will denote by $\|\cdot\|_{L^p}$
the norm of the Lebesgue space
$L^p(\R^2)$, $1\leq p\leq\infty$
and we will sometime
use also the local version
$L_{\text{loc}}^p(\R^2)$
of all functions whose
$p^\text{th}$ norm is
integrable over all
bounded set. We recall
that $G_m$ is the Green's function
of the fractional Laplacian,
see \eqref{e:green}, and
$k_m=\nabla^\perp G_m$
here plays the role
of the Biot-Savart kernel.
Finally we shall use
the symbol $\lesssim$
for inequalities up
to some constant that
does not depend on the
main parameters of the
problem, and thus ultimately
does not matter.
\section{Existence of weak solutions}\label{s:weak}

In this section we prove existence of weak
solutions for \eqref{e:gSQG} with
initial condition in $L^1(\R^2)\cap L^\infty(\R^2)$.
To this end we recast problem \eqref{e:gSQG} as
\begin{equation}\label{e:inviscid}
  \begin{cases}
    \partial_t\theta + \Div(u\theta)
      = 0,\\
    u
      = k_m\star\theta,
  \end{cases}
\end{equation}
where $k_m=\nabla^\perp G_m$ and
$G_m$ is the Green function for the fractional
Laplacian $(-\Delta)^{\frac{m}2}$ given in \eqref{e:green}.
\begin{definition}[Weak solution]\label{d:weak}
  Given $\theta_0\in L^1_\text{loc}(\R^2)$,
  a solution to \eqref{e:inviscid} is
  a distribution such that for all $t>0$
  and $\varphi\in C^\infty_c(\R^2)$,
  \[
    \int_{\R^2}(\theta(t,x)-\theta_0(x))\varphi(x)\,dx
        - \int_0^t\int_{\R^2}\theta(s,x)u(s,x)\cdot\nabla\varphi(s,x)\,dx\,ds
      = 0,
  \]
  where $u=k_m\star\theta$.
\end{definition}
For a function $f:\R^2\to\R$, define its
\emph{centre of pseudo-vorticity} as
\begin{equation}\label{e:fcentre}
  C_f
    \eqdef\int_{\R^2}x\,f(x)\,dx,
\end{equation}
whenever the integral is well defined,
and its \emph{moment of inertia}
\begin{equation}\label{e:inertia}
  J_f
    \eqdef\int_{\R^2}|x-c_f|^2|f(x)|^2\,dx.
\end{equation}
The proof of existence of weak solutions
of \eqref{e:inviscid}
proceeds through a vanishing viscosity
approximation. We will actually make
a two-steps approximation to prove
some conservation properties that
will turn out to be crucial for
the proof of Theorem~\ref{t:main}.

We start by stating a classical
inequality about the velocity
$u=k_m\star\theta$, that will
be useful for our purposes.
\begin{lemma}\label{l:mapping}
  Let $m\in(1,2)$.
  Then
  \[
    \|k_m\star f\|_{L^q}
      \lesssim \|f\|_{L^p}
      \lesssim \|f\|_{L^1}^{\frac1q+\frac12(m-1)}
        \|f\|_{L^\infty}^{\frac12(3-m)-\frac1q}.
  \]
  where $q>\frac2{3-m}$ and $\frac1p-\frac1q=\frac12(m-1)$.

  Moreover there is $c=c(\|f\|_{L^1},\|f\|_{L^\infty})$
  such that for all $x,y\in\R^2$,
  \[
    |k_m\star f(x) - k_m\star f(y)|
      \leq c(1\wedge|x-y|)^{m-1}.
  \]
\end{lemma}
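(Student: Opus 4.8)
The plan is to read everything off the homogeneity of the kernel: since $G_m(x)\sim|x|^{m-2}$ and $k_m=\nabla^\perp G_m$, one has the pointwise size bounds $|k_m(x)|\lesssim|x|^{m-3}$ and $|\nabla k_m(x)|\lesssim|x|^{m-4}$, which is the only information about $k_m$ the statement really needs. For the first inequality $\|k_m\star f\|_{L^q}\lesssim\|f\|_{L^p}$ I would invoke the Hardy--Littlewood--Sobolev inequality for convolution with the locally integrable, weakly singular kernel $|x|^{-(3-m)}$ on $\R^2$: the admissible range $0<3-m<2$ is exactly $m\in(1,2)$, the HLS scaling relation rearranges to $\tfrac1p-\tfrac1q=\tfrac12(m-1)$, and the HLS requirement $p>1$ translates, through this relation, into the stated constraint $q>\tfrac2{3-m}$. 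The second inequality is then the elementary Lebesgue interpolation $\|f\|_{L^p}\le\|f\|_{L^1}^{1/p}\|f\|_{L^\infty}^{1-1/p}$, where $\tfrac1p=\tfrac1q+\tfrac12(m-1)$ gives $1-\tfrac1p=\tfrac12(3-m)-\tfrac1q$, matching the two exponents claimed.

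For the continuity estimate I would write, with $r\eqdef|x-y|$,
\[
  k_m\star f(x)-k_m\star f(y)
    = \int_{\R^2}\bigl(k_m(x-z)-k_m(y-z)\bigr)f(z)\,dz,
\]
and split $\R^2$ into the near region $\{|x-z|\le 2r\}$ and its complement. On the near region I would discard the difference and bound each term separately: using $|k_m|\lesssim|w|^{m-3}$ and integrating over a ball of radius of order $r$ gives $\int_0^{\,r}\rho^{m-3}\,\rho\,d\rho\lesssim r^{m-1}$ (finite precisely because $m>1$), so this part is $\lesssim\|f\|_{L^\infty}r^{m-1}$. On the far region the segment joining $x$ and $y$ stays at distance $\gtrsim|x-z|$ from $z$, so the mean value theorem together with $|\nabla k_m|\lesssim|w|^{m-4}$ yields $|k_m(x-z)-k_m(y-z)|\lesssim r\,|x-z|^{m-4}$; integrating $r\,|w|^{m-4}$ over $\{|w|\ge 2r\}$ gives $r\int_{2r}^\infty\rho^{m-3}\,d\rho\lesssim r^{m-1}$, the integral converging at infinity precisely because $m<2$, and this part is again $\lesssim\|f\|_{L^\infty}r^{m-1}$. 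Together these give $|k_m\star f(x)-k_m\star f(y)|\lesssim\|f\|_{L^\infty}r^{m-1}$, which is the desired bound whenever $r\le 1$.

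For the regime $r\ge 1$ it suffices to bound the left-hand side by $2\|k_m\star f\|_{L^\infty}$ and to show $k_m\star f\in L^\infty$ with a constant depending only on $\|f\|_{L^1}$ and $\|f\|_{L^\infty}$. I would split $k_m=k_m\uno_{B_1(0)}+k_m\uno_{B_1(0)^c}$: the inner piece lies in $L^1$ (again since $m>1$), so Young's inequality controls $\|(k_m\uno_{B_1(0)})\star f\|_{L^\infty}$ by $\|f\|_{L^\infty}$, while the outer piece lies in $L^{p'}$ for any $p'>\tfrac2{3-m}$ (since $m<2$), so Young's inequality together with the interpolation of the first part controls $\|(k_m\uno_{B_1(0)^c})\star f\|_{L^\infty}$ by a power product of $\|f\|_{L^1}$ and $\|f\|_{L^\infty}$. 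Combining the two regimes yields the uniform bound $c(1\wedge|x-y|)^{m-1}$.

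The estimates themselves are routine once the kernel bounds are in place; the only point requiring care --- and the reason the hypothesis $m\in(1,2)$ is sharp for this argument --- is the balance at the two ends of the integration. Integrability of the singularity at the origin forces $m>1$, while convergence at infinity of the far-region integral, carried out using only the $L^\infty$ control on $f$ (no decay of $f$ is assumed), forces $m<2$. I expect this far-field convergence, and the bookkeeping of which norm of $f$ controls which region, to be the main things to get right.
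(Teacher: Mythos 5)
Your argument is correct, and it is exactly the standard one: Hardy--Littlewood--Sobolev with the kernel bound $|k_m(x)|\lesssim|x|^{m-3}$ for the $L^p\to L^q$ estimate, $L^1$--$L^\infty$ interpolation for the second inequality, and the usual near/far splitting with $|\nabla k_m(x)|\lesssim|x|^{m-4}$ for the H\"older bound, with the uniform bound for $|x-y|\geq1$ supplied by Young's inequality on $k_m\uno_{B_1(0)}$ and $k_m\uno_{B_1(0)^c}$ separately. The paper gives no proof at all --- it states the lemma as a classical fact --- and your write-up is precisely the argument it is implicitly invoking, including the correct identification of where $m>1$ and $m<2$ are each needed.
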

To prove the existence of weak solutions to \eqref{e:inviscid}
(see Theorem~\ref{t:weak}), we will
suitably regularize the initial condition
and the velocity. To this end,
let $\rho\in C^\infty_c(\R^2)$
be symmetric, $0\leq\rho\leq 1$,
and $\int_{\R^2}\rho(x)\,dx=1$, and set
$\rho_\epsilon=\epsilon^{-2}\rho(x/\epsilon)$.
Denote by $k_m^\epsilon$ the kernel
$k_m^\epsilon=\rho_\epsilon\star k_m$,
and, for $\theta_0\in L^1(\R^2)\cap L^\infty(\R^2)$,
set $\theta_0^\epsilon=\rho_\epsilon\star\theta_0$.
\subsection{The viscous approximation}

Given $\epsilon>0$ and $\nu>0$, consider the problem
\begin{equation}\label{e:aviscous}
  \begin{cases}
    \partial_t\theta + \Div(u\theta)
      = \nu\Delta\theta,\\
    u
      = k_m^\epsilon\star\theta.
  \end{cases}
\end{equation}
\begin{proposition}\label{p:aviscous}
  Given $\theta_0\in L^1(\R^2)\cap L^\infty(\R^2)$
  and $\epsilon>0$, $\nu>0$, there is a unique
  classical solution $\theta_{\epsilon,\nu}$ of \eqref{e:aviscous}
  with initial condition $\theta_0^\epsilon$. Moreover,
  for every $n\geq0$, every $p\in[1,\infty)$
  and every $T>0$, there is a number
  $c=c(\epsilon,\nu,n,p,T,\theta_0)$
  such that
  \begin{equation}\label{e:diffviscous}
    \sup_{[0,T]}\|D^\alpha\theta_{\epsilon,\nu}\|_{L^p}
      \leq c,
  \end{equation}
  for all multi-indices $\alpha$ with $|\alpha|=n$.
  In particular, for all $t>0$ and all $p\in[1,\infty]$,
  \begin{equation}\label{e:pviscous}
    \|\theta_{\epsilon,\nu}\|_{L^p}
      \leq \|\theta_0\|_{L^p}.
  \end{equation}
\end{proposition}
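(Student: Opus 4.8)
The plan is to treat \eqref{e:aviscous} as a semilinear transport--diffusion equation with a smooth, divergence-free drift and to derive every bound from energy estimates, exploiting that for fixed $\epsilon$ the regularised kernel is harmless. The constants are allowed to depend on $\epsilon$ and $\nu$, which removes the need for any uniform control.

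First I would record two structural facts about $k_m^\epsilon=\rho_\epsilon\star k_m$. Since $k_m\in L^1_\text{loc}(\R^2)$ decays at infinity and $\rho_\epsilon\in C^\infty_c(\R^2)$, writing $D^\alpha k_m^\epsilon=(D^\alpha\rho_\epsilon)\star k_m$ shows that $k_m^\epsilon$ is smooth with every derivative bounded, by a constant depending on $\epsilon$. Consequently, whenever $\theta\in L^1(\R^2)\cap L^\infty(\R^2)$ the velocity $u=k_m^\epsilon\star\theta$ is smooth with bounded derivatives, and its size is controlled through Lemma~\ref{l:mapping} (mollification does not increase $L^q$ norms). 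Moreover, because $k_m=\nabla^\perp G_m$ one has $\Div k_m=0$, whence $\Div k_m^\epsilon=\rho_\epsilon\star\Div k_m=0$ and therefore $\Div u=0$. Thus \eqref{e:aviscous} is in fact $\partial_t\theta+u\cdot\nabla\theta=\nu\Delta\theta$ with a smooth, bounded, divergence-free velocity.

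For existence and uniqueness I would pass to the mild formulation
\[
  \theta(t)
    = \e^{\nu t\Delta}\theta_0^\epsilon
      - \int_0^t\e^{\nu(t-s)\Delta}\Div\bigl(u(s)\theta(s)\bigr)\,ds,
\]
and solve it by contraction in $C([0,T];L^1\cap L^\infty)$ for $T$ small. The only singular feature is the spatial derivative in $\Div$, absorbed by the smoothing bound $\|\nabla\e^{\nu t\Delta}f\|_{L^p}\lesssim(\nu t)^{-1/2}\|f\|_{L^p}$; since $\theta\mapsto k_m^\epsilon\star\theta$ is bounded linear, the map $\theta\mapsto u\theta$ is locally Lipschitz and Banach's fixed-point theorem applies. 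As $\theta_0^\epsilon=\rho_\epsilon\star\theta_0$ is smooth, parabolic regularity upgrades this fixed point to a classical solution.

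The heart of the matter is the a priori bound \eqref{e:pviscous}, which also delivers global existence. For $p\in[1,\infty)$ I would multiply the equation by $|\theta|^{p-2}\theta$ and integrate: the transport term vanishes because $\int|\theta|^{p-2}\theta\,(u\cdot\nabla\theta)=\tfrac1p\int u\cdot\nabla|\theta|^p=-\tfrac1p\int(\Div u)|\theta|^p=0$, while the diffusion term equals $-\nu(p-1)\int|\theta|^{p-2}|\nabla\theta|^2\le0$. Hence $t\mapsto\|\theta_{\epsilon,\nu}(t)\|_{L^p}$ is non-increasing, which is \eqref{e:pviscous}; letting $p\to\infty$ covers $p=\infty$. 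These uniform-in-time $L^p$ bounds rule out blow-up and promote the local solution to a global one. Finally \eqref{e:diffviscous} follows by bootstrap: differentiating the equation and repeating the energy estimate on $D^\alpha\theta$, the velocity and all its derivatives are bounded in terms of lower-order norms of $\theta$ — any derivative falling on $u$ may be shifted onto the smooth mollifier $\rho_\epsilon$ — so Gronwall closes each induction step. The main obstacle is merely organising the induction on $|\alpha|$ and tracking how $\epsilon$ and $\nu$ enter the constants, rather than any genuine analytical difficulty.
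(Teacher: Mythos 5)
Your proposal is correct, and the a priori estimates at its core (the $L^p$ decay via multiplication by $|\theta|^{p-2}\theta$ with the transport term killed by $\Div u=0$, and the bound \eqref{e:diffviscous} by differentiating the equation, using that all derivatives of $u=k_m^\epsilon\star\theta$ are bounded, and closing with induction plus Gronwall) coincide with the paper's. Where you genuinely diverge is in how existence and uniqueness are obtained. The paper cites the literature for existence and then proves uniqueness separately by an $L^2$ energy estimate on the difference $\delta=\theta_1-\theta_2$, using the H\"older inequality with exponents $2$, $\tfrac{2}{2-m}$, $\tfrac{2}{m-1}$ together with Lemma~\ref{l:mapping} to control $\|\gamma\|_{L^{2/(2-m)}}$ by $\|\delta\|_{L^2}$; that argument is tied to the mapping properties of the unmollified kernel and, as the paper notes, survives the removal of the viscosity. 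You instead run a Banach fixed point on the Duhamel formulation, exploiting only that $k_m^\epsilon$ is smooth and bounded, which yields local existence and uniqueness in one stroke and then globalises via the $L^1\cap L^\infty$ bound; this is more self-contained and avoids the $m$-dependent exponents entirely, at the price of being strictly an $\epsilon>0$, $\nu>0$ argument with no inviscid afterlife. Two small points to tighten: for $1\le p<2$ the identity $\int|\theta|^{p-2}\theta\,\Delta\theta=-(p-1)\int|\theta|^{p-2}|\nabla\theta|^2$ needs a regularisation of $r\mapsto|r|^p$ (or the maximum-principle results of C\'ordoba--C\'ordoba that the paper invokes), since $|\theta|^{p-2}$ is singular on the zero set of $\theta$; and your fixed point gives uniqueness a priori only in the mild class, so you should add the standard one-line remark that any classical solution satisfying \eqref{e:diffviscous} is a mild solution.
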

\begin{proof}
  We give a sketch of the proof, and for simplicity
  we drop the subscript $_{\epsilon,\nu}$.
  Existence of a solution is standard,
  see for instance \cite{ConCorWu2001,ChaConWu2012}.
  We first show the conservation in $L^p$,
  $1\leq p<\infty$,
  \[
    \begin{aligned}
      \frac{d}{dt}\int_{\R^2}|\theta(t,x)|^p\,dx
        &= p\int_{\R^2}|\theta|^{p-1}\sgn(\theta)(\nu\Delta\theta-\Div(u\theta))\,dx\\
        &= p\nu\int_{\R^2}|\theta|^{p-1}\sgn(\theta)\Delta\theta\,dx
          - p\int_{\R^2}u\cdot\nabla|\theta|^p\,dx.
    \end{aligned}
  \]
  The first integral on the right hand side is non-positive,
  see \cite{ConCorWu2001,CorCor2004}, the second integral
  is zero by integration by parts, since $\Div u=0$,
  and this proves that the derivative is non-positive.
  The case $p=\infty$ follows in the limit $p\uparrow\infty$.

  Likewise, if $\eta=D^\alpha\theta$, then $\eta$ solves
  \[
    \partial_t\eta + \Div(u\eta) - \nu\Delta\eta
      = (\partial_{x_1}u)\cdot\nabla D^{\alpha-(1,0)}\theta
        + (\partial_{x_2}u)\cdot\nabla D^{\alpha-(0,1)}\theta
        + F,
  \]
  where $F$ is bilinear in the derivatives of order at most $n$
  of $u$ and of order at most $n-1$ of $\theta$.
  Since by Lemma \ref{l:mapping}, for every multi-index $\beta$,
  $\|D^\beta u\|_{L^\infty}\leq c(\epsilon,\beta)\|k_m\star\theta\|_{L^\infty}
  \leq c(m,\epsilon,\theta_0)$, we have that,
  \[
    \frac{d}{dt}\int_{\R^2}\sum_{|\alpha|=n}|D^\alpha\theta|^p\,dx
      \leq (1+c(m,\epsilon,\theta_0))
        \int_{\R^2}\sum_{|\alpha|=n}|D^\alpha\theta|^p\,dx
        + \|F\|_{L^p}
  \]
  The bound follows by an induction argument to estimate
  $\|F\|_{L^p}$ in terms of lower order derivatives of $\theta$,
  and Gronwall's lemma.

  To prove uniqueness, let $\theta_1,u_1$ and $\theta_2,u_2$
  solutions to \eqref{e:aviscous} with the same initial
  condition $\theta_0^\epsilon$,
  and set $\delta=\theta_1-\theta_2$, $\gamma=u_1-u_2$. Then
  \[
    \partial_t\delta + \Div(u_1\delta + \gamma\theta_2)
      = \nu\Delta\delta,
  \]
  therefore
  \[
    \begin{aligned}
      \frac{d}{dt}\int_{\R^2}\delta^2\,dx
        &= -\int_{\R^2}u_1\cdot\nabla\delta^2\,dx
          -2\int_{\R^2}\delta\gamma\nabla\theta_2\,dx
          -2\nu\int_{\R^2}|\nabla\delta|^2\,dx\\
        &\leq -2\int_{\R^2}\delta\gamma\nabla\theta_2\,dx.
    \end{aligned}
  \]
  By the H\"older inequality and Lemma~\ref{l:mapping},
  \[
    \Bigl|\int_{\R^2}\delta\gamma\nabla\theta_2\,dx\Bigr|
      \leq \|\delta\|_{L^2}\|\gamma\|_{L^{\frac2{2-m}}}
        \|\nabla\theta_2\|_{L^{\frac2{m-1}}}
      \lesssim \|\nabla\theta_2\|_{L^{\frac2{m-1}}}
        \|\delta\|_{L^2}^2.
  \]
  Uniqueness follows by the Gronwall lemma.
\end{proof}
\subsection{The inviscid approximation}

Given $\epsilon>0$, consider the
following inviscid problem
with regularized velocity,
\begin{equation}\label{e:ainviscid}
  \begin{cases}
    \partial_t\theta + \Div(u\theta)
      = 0,\\
    u
      = k_m^\epsilon\star\theta.
  \end{cases}
\end{equation}
\begin{proposition}\label{p:ainviscid}
  Given $\theta_0\in L^1(\R^2)\cap L^\infty(\R^2)$
  and $\epsilon>0$, there is a unique
  classical solution $\theta_\epsilon$ of \eqref{e:ainviscid}
  with initial condition $\theta_0^\epsilon$. Moreover,
  for every $n\geq0$, every $p\in(1,\infty)$
  and every $T>0$, there is a number
  $c=c(\epsilon,\nu,n,p,T,\theta_0)$
  such that
  \begin{equation}\label{e:diffinviscid}
    \sup_{[0,T]}\|D^\alpha\theta_\epsilon\|_{L^p}
      \leq c,
  \end{equation}
  for all multi-indices $\alpha$ with $|\alpha|=n$.
  In particular, for all $t>0$ and all $p\in[1,\infty]$,
  \begin{equation}\label{e:painviscid}
    \|\theta_\epsilon\|_{L^p}
      \leq \|\theta_0\|_{L^p}.
  \end{equation}
  Finally, for all $t>0$,
  \begin{equation}\label{e:amass}
    \int_{\R^2}\theta_\epsilon(t,x)\,dx
      = \int_{\R^2}\theta_0^\epsilon(x)\,dx,
  \end{equation}
  and, if $\theta_0\geq0$, then $\theta_\epsilon(t,x)\geq0$
  for all $t>0$.
\end{proposition}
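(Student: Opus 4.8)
The plan is to obtain Proposition~\ref{p:ainviscid} as the vanishing viscosity limit $\nu\downarrow0$ of the solutions $\theta_{\epsilon,\nu}$ constructed in Proposition~\ref{p:aviscous}, with $\epsilon>0$ held fixed throughout. The key point is that all the bounds \eqref{e:diffviscous} and \eqref{e:pviscous} from Proposition~\ref{p:aviscous}, once one tracks the constants, are uniform in $\nu$: indeed the $L^p$ estimate \eqref{e:pviscous} follows from dissipativity of the viscous term and divergence-freeness of $u$, neither of which degenerates as $\nu\downarrow0$, and the higher derivative estimates rest on the same Gronwall argument where the viscous term only helps. The regularization by $\rho_\epsilon$ is essential here: since $u=k_m^\epsilon\star\theta=\rho_\epsilon\star k_m\star\theta$, every derivative $D^\beta u$ is controlled by $\|k_m\star\theta\|_{L^\infty}$ with a constant $c(\epsilon,\beta)$, so the velocity field stays smooth uniformly in $\nu$.

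First I would fix $\epsilon>0$ and extract, from the family $\{\theta_{\epsilon,\nu}\}_{\nu>0}$, uniform-in-$\nu$ bounds on $\sup_{[0,T]}\|D^\alpha\theta_{\epsilon,\nu}\|_{L^p}$ for all $|\alpha|=n$ and all $n$. Combined with the equation \eqref{e:aviscous}, these give uniform bounds on $\partial_t\theta_{\epsilon,\nu}$ in suitable Sobolev norms (the viscous term $\nu\Delta\theta$ is bounded in $L^p$ uniformly since $\nu\leq1$ and the Laplacian is a second derivative already controlled). An Aubin--Lions type compactness argument then produces a subsequence converging, say in $C([0,T];L^p_{\text{loc}})$ and weakly-$*$ in the higher Sobolev norms, to a limit $\theta_\epsilon$. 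Passing to the limit in the weak formulation of \eqref{e:aviscous} is then routine: the nonlinear term $\Div(u\theta)$ passes to the limit because $u_{\epsilon,\nu}=k_m^\epsilon\star\theta_{\epsilon,\nu}$ converges locally uniformly (the kernel $k_m^\epsilon$ is smooth), while the viscous term $\nu\Delta\theta_{\epsilon,\nu}$ vanishes since $\nu\downarrow0$ and $\Delta\theta_{\epsilon,\nu}$ stays bounded. This yields a classical solution of \eqref{e:ainviscid} satisfying \eqref{e:diffinviscid} and \eqref{e:painviscid}, the latter being inherited from \eqref{e:pviscous} by lower semicontinuity of the norm under weak convergence.

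Uniqueness for \eqref{e:ainviscid} I would prove directly, mimicking the uniqueness argument of Proposition~\ref{p:aviscous}. Taking two solutions with the same datum and setting $\delta=\theta_1-\theta_2$, $\gamma=u_1-u_2$, the difference now solves the transport equation without the dissipative term; but the $-2\nu\int|\nabla\delta|^2$ contribution was discarded (it only helped) in that computation, so the same Hölder estimate via Lemma~\ref{l:mapping} bounds $\tfrac{d}{dt}\|\delta\|_{L^2}^2\lesssim\|\nabla\theta_2\|_{L^{2/(m-1)}}\|\delta\|_{L^2}^2$, and Gronwall closes the argument. For the conservation of mass \eqref{e:amass} I would integrate \eqref{e:ainviscid} over $\R^2$: since $\partial_t\int\theta_\epsilon=-\int\Div(u\theta_\epsilon)=0$ by the divergence theorem (using the rapid decay guaranteed by the $L^1$ bounds), the total integral is conserved. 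Finally, non-negativity when $\theta_0\geq0$ follows from the maximum principle for the transport equation: $\theta_\epsilon(t,\cdot)=\theta_0^\epsilon\circ\Phi_t^{-1}$ where $\Phi_t$ is the (measure-preserving, since $\Div u=0$) flow of the smooth velocity field $u$, and $\theta_0^\epsilon=\rho_\epsilon\star\theta_0\geq0$; alternatively it is inherited from the corresponding property of the viscous approximation, whose parabolic maximum principle preserves positivity, in the limit $\nu\downarrow0$.

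The main obstacle I anticipate is ensuring the compactness and convergence are genuinely uniform in $\nu$ at fixed $\epsilon$, which requires care that the constant $c(\epsilon,\nu,n,p,T,\theta_0)$ appearing in \eqref{e:diffviscous} does not blow up as $\nu\downarrow0$. Re-examining the Gronwall argument in Proposition~\ref{p:aviscous}, the growth rate $1+c(m,\epsilon,\theta_0)$ and the forcing $\|F\|_{L^p}$ are manifestly $\nu$-independent, and the viscous term contributes a favorable sign, so the bound is in fact uniform for $\nu\in(0,1]$; making this explicit is the one place where one must resist treating the earlier proposition as a black box.
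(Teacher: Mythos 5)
Your proposal follows essentially the same route as the paper: a vanishing-viscosity limit at fixed $\epsilon$ with $\nu$-uniform bounds inherited from Proposition~\ref{p:aviscous}, a compactness/weak-$\star$ passage to the limit, uniqueness by the same $L^2$ energy estimate (noting the viscous term was only helpful), and positivity via the flow of the smooth divergence-free velocity field. The only cosmetic difference is that the paper obtains mass conservation, sign conservation and the $p=1$ case of \eqref{e:painviscid} all at once from the backward characteristics \eqref{e:characteristic} of the limiting equation, whereas you integrate the equation against a cutoff for the mass and invoke lower semicontinuity for the norms; both are standard and equivalent here.
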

\begin{proof}
  In Proposition~\ref{p:aviscous} we have seen that
  the sequence
  $(\theta_{\epsilon,\nu})_{\nu>0}$ of solutions
  of \eqref{e:aviscous} is bounded
  in $L^\infty(0,T;W^{k,p}(\R^2))$ for all $p>1$,
  all $T>0$ and all $k\geq1$, and we wish to use
  this sequence to construct a solution
  of \eqref{e:ainviscid}. By a diagonal argument
  there is a sequence $(\nu_n)_{n\geq1}$ such that
  $(\theta_{\epsilon,\nu_n})_{n\geq1}$ weak-$\star$
  converges in $L^\infty(0,T;W^{k,p}(\R^2))$
  to a function $\theta_\epsilon$, for every
  $T>0$, $k\geq1$ and $p>1$. In particular,
  \eqref{e:diffinviscid} and
  \eqref{e:painviscid} (for $p>1$)
  hold.
  
  The convergence of $\theta_{\epsilon,\nu}$,
  as well as of its derivatives and of their
  respective equations, goes in an analogous,
  even simpler, way as in the proof of
  Theorem~\ref{t:weak}, where all details
  will be given, and is therefore omitted here.
  The argument for uniqueness is the same as
  in Proposition \ref{p:aviscous}, since
  the viscous term is not used in the proof.

  Finally, to prove conservation
  of mass and conservation of sign,
  consider for each $x\in\R^2$
  and each $t>0$ the backward
  system (of characteristics),
  \begin{equation}\label{e:characteristic}
    \begin{cases}
      \frac{d}{ds}Y^{t,x}_s
        = u_\epsilon(s,Y^{t,x}_s),\\
      Y^{t,x}_t
        = x.
    \end{cases}
  \end{equation}
  The solution is well defined and global
  since $u$ is continuous and globally
  Lipschitz in the space variable.
  It is standard to see that
  $x\mapsto Y^{t,x}_s$, with
  $0\leq s\leq t$, are diffeomorphisms.
  Moreover, if $J(s,x)$ is the determinant
  of the Jacobian matrix of
  $x\mapsto Y^{t,x}_s$, then
  $J(t,x)=1$ and
  \[
    \dot J(s,x)
      = (\Div u_\epsilon)(s,Y^{t,x}_s)J(s,x),
  \]
  therefore $J(s,x)=1$ for all $s\in[0,t]$,
  since $u_\epsilon$
  is divergence free. Finally,
  a simple computation shows that
  \[
    \frac{d}{ds}\theta_\epsilon(s,Y^{t,x}_s)
      = 0.
  \]
  These arguments, together with the
  simple remark that
  if $\theta_0\geq0$, then $\theta_0^\epsilon\geq0$
  (since the regularizing kernel is positive)
  prove conservation of mass and
  conservation of sign, as well as
  \eqref{e:painviscid} for $p=1$.
\end{proof}
The above existence and uniqueness result
can be improved. Indeed, we can get rid
of the regularization in the initial
condition. To this end, denote by
$\bd{\cdot}$ the (bounded) metric
$\bd{x-y}=1\wedge|x-y|$, and let
$W_1$ be the $1$-Wasserstein distance
on non-negative finite measures
on $\R^2$.
The Wasserstein distance
can be extended to signed measure
with equal positive and negative masses
by $W_1(\mu,\nu)=W_1(\mu_+,\nu_+)+W_1(\mu_-,\nu_-)$.
Since here $W_1$ is based on a bounded metric,
convergence in $W_1$ is equivalent to the
standard weak convergence of measures.
\begin{corollary}\label{c:wasserstein}
  Given a finite measure $\theta_0$ on $\R^2$,
  and $\epsilon>0$, there is a unique solution $\theta$
  on $[0,\infty)$ in the sense of distributions
  of \eqref{e:ainviscid}.

  Moreover, if $\theta_0^1,\theta_0^2$ are two
  different measures with the same positive
  and negative masses, then for all $T>0$,
  \begin{equation}\label{e:wasserstein}
    \sup_{t\in [0,T]}W_1(\theta_1(t),\theta_2(t))
      \leq C(c_\epsilon,T)W_1(\theta_0^1,\theta_0^2),
  \end{equation}
  where $\theta_1,\theta_2$ are solutions
  of \eqref{e:ainviscid} with respective initial
  conditions $\theta_0^1,\theta_0^2$,
  and $c_\epsilon=2\|k_m^\epsilon\|_{L^\infty}
  \vee\|\nabla k_m^\epsilon\|_{L^\infty}$.

  Finally, if
  $\theta_0\in L^1(\R^2)\cap L^\infty(\R^2)$,
  then
  \[
    \|\theta(t)\|_{L^p}
      \leq \|\theta_0\|_{L^p},
  \]
  for every $t\geq$ and $p\in[0,\infty]$.
\end{corollary}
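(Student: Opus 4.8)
The plan is to leverage the strong well-posedness theory already established in Proposition~\ref{p:ainviscid} for regularized (mollified) initial data, and then extend to arbitrary finite measures by a density/stability argument built on the Wasserstein bound \eqref{e:wasserstein}. Concretely, I would first prove the quantitative stability estimate \eqref{e:wasserstein} at the level of smooth solutions, since this is the engine that drives everything else; once we have a Lipschitz dependence on the initial datum in a metric that metrizes weak convergence, both existence for rough data and uniqueness follow by standard soft arguments.

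First, I would establish \eqref{e:wasserstein} for two solutions $\theta_1,\theta_2$ arising from \emph{smooth} initial data (so that Proposition~\ref{p:ainviscid} applies and the flows are genuine diffeomorphisms). The natural tool is the characteristic flow \eqref{e:characteristic}: since $u=k_m^\epsilon\star\theta$ has bounded Lipschitz velocity (the regularization by $\rho_\epsilon$ makes $k_m^\epsilon$ and $\nabla k_m^\epsilon$ bounded, which is exactly why $c_\epsilon$ appears), each solution is transported along its own flow, and the positive and negative parts are transported separately (using conservation of sign from Proposition~\ref{p:ainviscid}). I would couple an optimal plan at time $0$ and push it forward by the two flows simultaneously. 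A Gronwall estimate on the growth of the distance between coupled characteristics, controlled by $\|\nabla k_m^\epsilon\|_{L^\infty}$ for the drift-separation and by $\|k_m^\epsilon\|_{L^\infty}$ for the difference of velocity fields evaluated at the two measures, yields the exponential factor $C(c_\epsilon,T)$. The bounded metric $\bd{\cdot}$ is used precisely so that the velocity difference $|u_1(x)-u_2(x)|$ can be controlled by $W_1(\theta_1,\theta_2)$ itself, closing the loop.

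With \eqref{e:wasserstein} in hand for smooth data, the existence part for a general finite measure $\theta_0$ is immediate: I would approximate $\theta_0$ in $W_1$ by its mollifications $\theta_0^{(k)}=\rho_{1/k}\star\theta_0$, which are in $L^1\cap L^\infty$, solve \eqref{e:ainviscid} for each by Proposition~\ref{p:ainviscid}, and use \eqref{e:wasserstein} to see that the corresponding solutions form a Cauchy sequence in $C([0,T];(\mathcal{M},W_1))$ for every $T$. The limit $\theta$ is a distributional solution, the passage to the limit in the weak formulation being justified by the uniform bounds and the continuity of $k_m^\epsilon$. Uniqueness follows because \eqref{e:wasserstein} extends to the limit by continuity, forcing any two solutions with the same initial datum to coincide. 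The final $L^p$-contraction statement persists under this weak limit by lower semicontinuity of the $L^p$ norms, giving $\|\theta(t)\|_{L^p}\leq\|\theta_0\|_{L^p}$ whenever $\theta_0\in L^1\cap L^\infty$.

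The \textbf{main obstacle} I anticipate is the bookkeeping for signed measures in the Wasserstein estimate. Since $W_1$ here is defined by splitting into positive and negative parts, I must ensure that the characteristic flow respects this decomposition, which relies on conservation of sign; but the two solutions $\theta_1,\theta_2$ are transported by \emph{different} flows, so the coupling of their positive parts is not transported by a single map and the estimate on the velocity difference must genuinely see the full $W_1$ distance rather than just a transport cost for one flow. Handling this cleanly — showing that $\|u_1(\cdot)-u_2(\cdot)\|_{L^\infty}\lesssim\|k_m^\epsilon\|_{L^\infty}\, W_1(\theta_1,\theta_2)$ via the Kantorovich–Rubinstein duality, using that $k_m^\epsilon$ is Lipschitz with respect to $\bd{\cdot}$ — is the technical heart of the argument, and the rest is Gronwall.
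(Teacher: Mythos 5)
Your proposal is correct and follows essentially the same route as the paper: the key Lipschitz bound $\|k_m^\epsilon\star\mu-k_m^\epsilon\star\nu\|_{L^\infty}\leq c_\epsilon W_1(\mu,\nu)$ via Kantorovich--Rubinstein duality with respect to the bounded metric $\bd{\cdot}$, a coupling of the initial data pushed forward by the two characteristic flows, a Gronwall estimate on the separation of coupled trajectories, and then an infimum over couplings to close the loop, with existence and uniqueness for general finite measures obtained from the resulting $W_1$-stability together with the two-parameter regularization of Proposition~\ref{p:ainviscid}. The paper's proof is exactly this argument, so no further comparison is needed.
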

\begin{proof}
  The idea here is to consider in Proposition~\ref{p:ainviscid}
  above two regularization parameters: one for the velocity
  ($\epsilon$), and one for the initial condition. The
  $L^p$ conservation follows as in the
  previous Proposition~\ref{p:ainviscid}.
  We prove here only \eqref{e:wasserstein},
  which in particular proves
  existence
  and uniqueness.

  Let us notice first that if $\mu,\nu$ are measures,
  then
  \begin{equation}\label{e:lipwas}
    \|k_m^\epsilon\star\mu - k_m^\epsilon\star\nu\|_{L^\infty}
      \leq c_\epsilon W_1(\mu,\nu).
  \end{equation}
  This is immediate since $k_m^\epsilon$ is
  Lipschitz with respect to $\bd{\cdot}$ with Lipschitz
  constant $c_\epsilon$, and by duality,
  for probability measures $\mu,\nu$,
  \[
    W_1(\mu,\nu)
      = \sup\int_{\R^2}f\,d(\mu-\nu),
  \]
  where the supremum is taken over all
  $\bd{\cdot}$-Lipschitz function with
  Lipschitz constant $1$.

  Set $u_i=k_m^\epsilon\star\theta_i$, $i=1,2$.
  We claim that the following inequality holds,
  \begin{equation}\label{e:wass}
    W_1(\theta_1(t),\theta_2(t))
      \leq \e^{c_\epsilon t}W_1(\theta^0_1,\theta^0_2)
        + c_\epsilon\e^{c_\epsilon t}\int_0^t W_1(\theta_1(s),\theta_2(s))\,ds.
  \end{equation}
  By Gronwall's lemma, \eqref{e:wasserstein} then follows.
  We turn to the proof of \eqref{e:wass}.
  Let $\Prob$ be a coupling of $|\theta_1^0|,|\theta^0_2|$,
  then the measure $\Prob_t$, defined as
  \[
    \int_{\R^2}\int_{\R^2}f(x,y)\,\Prob_t(\,dx,\,dy)
      = \int_{\R^2}\int_{\R^2}f(X^{t,x}_1(0),X^{t,y}_2(0))\,\Prob(dx,dy),
  \]
  is a coupling of $|\theta_1(t)|,|\theta_2(t)|$,
  where $X^{t,x}_i$, $i=1,2$ are the back-to-label
  maps of \eqref{e:characteristic} corresponding
  to $u_1,u_2$. By \eqref{e:lipwas} we have that
  \[
    \begin{aligned}
      \bd{X^{t,x}_1(0) - X^{t,y}_2(0)}
        &\leq\bd{x-y} +
          \int_0^t |u_1(t-s,X_1^{t,x}(s)) - u_2(t-s,X_2^{t,x}(s))|\,ds\\
        &\leq\bd{x-y} +
          c_\epsilon\int_0^t\bd{X^{t,x}_1(s) - X^{t,y}_2(s)}\,ds + {}\\
        &\quad + c_\epsilon\int_0^t W_1(\theta_1(s),\theta_2(s))\,ds.
    \end{aligned}
  \]
  The Gronwall lemma yields
  \[
    \bd{X^{t,x}_1(0) - X^{t,y}_2(0)}
      \leq \e^{c_\epsilon t}\bd{x-y}
        + c_\epsilon\e^{c_\epsilon t}\int_0^t W_1(\theta_1(s),\theta_2(s))\,ds.
  \]
  By integrating with $\Prob$ we have
  \[
    W_1(\theta_1(t),\theta_2(t))
      \leq \e^{c_\epsilon t}\int_{\R^2}\int_{\R^2}\bd{x-y}\,\Prob(dx,dy)
        + c_\epsilon\e^{c_\epsilon t}\int_0^t W_1(\theta_1(s),\theta_2(s))\,ds,
  \]
  and taking the infimum over all $\Prob$ yields \eqref{e:wass}.
\end{proof}
\subsection{The inviscid problem}

We first prove existence of a weak solution
for problem \eqref{e:inviscid}.
\begin{theorem}\label{t:weak}
  Let $\theta_0\in L^1(\R^2)\cap L^\infty(\R^2)$,
  then there is a solution $\theta$
  of \eqref{e:inviscid} on $[0,\infty)$
  with initial condition $\theta_0$,
  in the sense of
  Definition \ref{d:weak}. Moreover,
  \begin{equation}\label{e:pinviscid}
    \|\theta(t)\|_{L^p}
      \leq \|\theta_0\|_{L^p},
  \end{equation}
  for every $p\in[1,\infty]$ and
  all $t>0$.
\end{theorem}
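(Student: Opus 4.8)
The plan is to construct the weak solution $\theta$ as a limit of the inviscid regularized solutions $\theta_\epsilon$ from Proposition~\ref{p:ainviscid} as $\epsilon\downarrow0$, thereby removing the velocity regularization. First I would record the uniform bounds that survive the limit: by \eqref{e:painviscid} the family $(\theta_\epsilon)_{\epsilon>0}$ is bounded in $L^\infty(0,\infty;L^p(\R^2))$ for every $p\in[1,\infty]$ by $\|\theta_0\|_{L^p}$, with constants independent of $\epsilon$. This immediately gives, for any fixed $T>0$, weak-$\star$ compactness in $L^\infty(0,T;L^p(\R^2))$ for $p\in(1,\infty]$, so along a subsequence $\theta_\epsilon\rightharpoonup\theta$ with $\theta$ inheriting \eqref{e:pinviscid}. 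The key uniform estimate on the velocity comes from Lemma~\ref{l:mapping}: since $u_\epsilon=k_m^\epsilon\star\theta_\epsilon=\rho_\epsilon\star(k_m\star\theta_\epsilon)$ and mollification does not increase the relevant norms, the H\"older-type bound and especially the uniform modulus of continuity $|u_\epsilon(x)-u_\epsilon(y)|\leq c\,(1\wedge|x-y|)^{m-1}$ hold with a constant $c=c(\|\theta_0\|_{L^1},\|\theta_0\|_{L^\infty})$ that is independent of $\epsilon$.

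Next I would upgrade weak convergence to something strong enough to pass to the limit in the nonlinear term $\theta_\epsilon u_\epsilon$. The strategy is an Aubin--Lions / Arzel\`a--Ascoli argument in negative Sobolev norms. From the equation $\partial_t\theta_\epsilon=-\Div(u_\epsilon\theta_\epsilon)$ together with the uniform bounds on $u_\epsilon$ (equicontinuous, uniformly bounded) and on $\theta_\epsilon$ (bounded in every $L^p$), the product $u_\epsilon\theta_\epsilon$ is bounded in $L^\infty(0,T;L^q(\R^2))$ for suitable $q>1$, hence $\partial_t\theta_\epsilon$ is uniformly bounded in $L^\infty(0,T;W^{-1,q}(\R^2))$. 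Combined with spatial compactness on bounded sets, this yields equicontinuity in time with values in a negative Sobolev space and, after a diagonal extraction over an exhaustion of $\R^2$ by balls, strong convergence $\theta_\epsilon\to\theta$ in $C([0,T];H^{-s}_{\mathrm{loc}})$ for some $s>0$. Simultaneously the equicontinuity of $u_\epsilon$ gives, via Arzel\`a--Ascoli on compacts, local uniform convergence $u_\epsilon\to u$; and one checks $u=k_m\star\theta$ by writing $k_m^\epsilon\star\theta_\epsilon=k_m\star\theta_\epsilon+(\rho_\epsilon\star k_m-k_m)\star\theta_\epsilon$ and showing the correction vanishes, using that $k_m\in L^1_{\mathrm{loc}}+L^\infty$ away from the singularity and that $\rho_\epsilon$ approximates the identity.

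With both convergences in hand I would pass to the limit in the weak formulation of \eqref{e:ainviscid} tested against $\varphi\in C^\infty_c(\R^2)$. The linear term $\int(\theta_\epsilon(t)-\theta_0^\epsilon)\varphi$ converges by the weak convergence of $\theta_\epsilon$ and the fact that $\theta_0^\epsilon=\rho_\epsilon\star\theta_0\to\theta_0$ in $L^1$. For the nonlinear term $\int_0^t\!\int\theta_\epsilon u_\epsilon\cdot\nabla\varphi$, the product of a weakly convergent factor ($\theta_\epsilon$) with a locally uniformly convergent factor ($u_\epsilon$), both against the compactly supported $\nabla\varphi$, passes to the limit $\int_0^t\!\int\theta u\cdot\nabla\varphi$; here the strong local convergence of at least one factor is what makes the weak$\times$weak product behave, which is precisely why the compactness step above is needed. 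This establishes that $\theta$ satisfies Definition~\ref{d:weak}, and \eqref{e:pinviscid} has already been secured from the uniform $L^p$ bounds.

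The main obstacle is the passage to the limit in the nonlinear term, and within it the verification that $u=k_m\star\theta$ rather than some spurious limit. The difficulty is twofold: the Biot--Savart kernel $k_m$ is singular (it behaves like $|x|^{m-3}$, which is non-integrable near the origin for $m<2$), so the identification $u_\epsilon\to k_m\star\theta$ requires careful control of the singular convolution under only weak convergence of $\theta_\epsilon$; and compactness on the unbounded domain $\R^2$ forces either a tightness/mass-conservation argument (available from \eqref{e:amass}) or a localized diagonal extraction to prevent loss of mass at infinity. The saving grace throughout is the $\epsilon$-uniform H\"older modulus of continuity for $u_\epsilon$ from Lemma~\ref{l:mapping}, which converts weak control of $\theta_\epsilon$ into genuine pointwise (locally uniform) control of the velocity and thereby tames the nonlinearity.
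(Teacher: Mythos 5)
Your proposal is correct and its skeleton coincides with the paper's proof: both pass to the limit $\epsilon_n\downarrow0$ in the regularized inviscid problem \eqref{e:ainviscid}, extract weak-$\star$ limits from the uniform bounds \eqref{e:painviscid}, use the equation to bound $\partial_t\theta_\epsilon$ in a negative Sobolev space and invoke Aubin--Lions, split the kernel into a near part (in $L^q$ for $q<\tfrac{2}{3-m}$) and a far part (decaying like $R^{-\alpha}$), and pass to the limit in the nonlinear term as a weak-times-strong product. The genuine difference is the mechanism for strong convergence of the velocity: the paper first obtains strong convergence $\theta_n\to\theta$ in $C([0,T];L^p_{\text{loc}}(\R^2))$ and then deduces $u_n\to u$ in $L^p(0,T;L^p_{\text{loc}})$ by H\"older's inequality against the split kernel, whereas you keep $\theta_\epsilon$ only weakly convergent and lean instead on the second estimate of Lemma~\ref{l:mapping}, the $\epsilon$-uniform H\"older modulus of $u_\epsilon$, together with Arzel\`a--Ascoli. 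Your route asks less of $\theta_\epsilon$, but needs two small repairs. First, uniform-in-$t$ convergence of $u_\epsilon$ via Arzel\`a--Ascoli requires some equicontinuity in time, which is not free (formally $\partial_t u_\epsilon$ involves the more singular $\nabla k_m$); it is cleaner to prove pointwise convergence $u_\epsilon(t,x)\to k_m\star\theta(t,\cdot)(x)$ for each fixed $(t,x)$ --- using the uniform $L^p$ bounds on the near and far pieces of the kernel and the $C([0,T];H^{-s}_{\text{loc}})$ convergence on the bounded middle piece --- and then upgrade to strong $L^p$ convergence in space-time by dominated convergence, which is all the nonlinear term requires. Second, your compactness argument secures \eqref{e:pinviscid} only for $p\in(1,\infty]$; the case $p=1$ needs weak lower semicontinuity of $\int_{B_R}|\theta|\,dx$ under weak $L^p$ convergence combined with the uniform bound $\|\theta_\epsilon\|_{L^1}\leq\|\theta_0\|_{L^1}$. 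Neither point affects the validity of the strategy.
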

\begin{proof}
  The family $(\theta_\epsilon)_{\epsilon>0}$
  of solutions to \eqref{e:ainviscid} is bounded
  in $L^\infty(0,T;L^p(\R^2))$ for all $p>1$,
  and all $T>0$. By a diagonal argument
  there is a sequence $(\epsilon_n)_{n\geq1}$ such that
  $(\theta_{\epsilon_n})_{n\geq1}$ weak-$\star$
  converges in $L^\infty(0,T;L^p(\R^2))$
  to a function $\theta$, for every
  $T>0$ and $p>1$.
  In the rest of the proof we will
  set $\theta_n=\theta_{\epsilon_n}$,
  $\rho_n=\rho_{\epsilon_n}$,
  $u_n=\rho_n\star k_m\star\theta_n$,
  and $u=k_m\star\theta$.

  \emph{Step 1: strong convergence of $\theta_n$}.
  Fix $\varphi\in C^\infty_c(\R^2)$. By
  the H\"older inequality,
  \[
    \int_{\R^2}\varphi\partial_t\theta_n\,dx
      = \int_{\R^2}\theta_n u_n\cdot\nabla\varphi\,dx
      \leq \|\nabla\varphi\|_{L^2}\|\theta_n\|_{L^2}\|u_n\|_{L^\infty}
      \leq c\|\nabla\varphi\|_{L^2}.
  \]
  Therefore $(\partial_t(\varphi\theta_n))_{n\geq1}$ is bounded
  in $L^\infty(0,T;H^{-1}(\supp\varphi))$. Since
  $(\varphi\theta_n)_{n\geq1}$ is
  bounded in $L^\infty(0,T;L^2(\supp\varphi))$,
  the Aubin-Lions lemma ensures that
  $(\varphi\theta_n)_{n\geq1}$ is compact
  in $C([0,T];L^2(\supp\varphi))$. Thus
  $(\varphi\theta_n)_{n\geq1}$
  converges strongly to $\varphi\theta$
  in $C([0,T];L^2(\supp\varphi))$. In
  conclusion, by \eqref{e:painviscid},
  $(\theta_n)_{n\geq1}$ converges
  strongly to $\theta$ in $C([0,T];L_\text{loc}^p(\R^2))$
  for all $T>0$ and all $p\in[1,\infty)$.

  \emph{Step 2: Conservation of $L^p$ norms}.
  Formula \eqref{e:pinviscid} for $p<\infty$
  follows from the previous step and
  \eqref{e:painviscid}. The case $p=\infty$
  follows classically by the convergence of
  $L^p$ norms to the $L^\infty$ norms.

  \emph{Step 3: strong convergence of $u_n$}.
  We show that $u_n$ converges to $u$ strongly in
  $L^p(0,T;L_\text{loc}^p(\R^2))$ for
  all $p\in[1,\infty)$. Since by
  \eqref{e:painviscid} and \eqref{e:pinviscid}
  $u_n$ and $u$ are uniformly
  bounded, it is sufficient to prove
  that
  \[
    \int_0^T |u_n(x)-u(x)|^p\,dx
      \longrightarrow0,
  \]
  for {a.\,e.} $x$, all $T>0$ and
  all $p>\frac2{m-1}$. Now,
  \[
    u_n(x) - u(x)
      = \rho_n\star k_m\star(\theta_n-\theta)(x)
        + \bigl(\rho_n\star u(x) - u(x)\bigr).
  \]
  Our claim for the second term on the right
  hand side is standard, so we concentrate on
  the first term. Given $R>0$, write
  $k_m^i=k_m\uno_{B_R(0)}$ and $k_m^o=k_m\uno_{B_R(0)^c}$.
  By the H\"older and Young inequalities,
  and \eqref{e:painviscid} and \eqref{e:pinviscid},
  \[
    \|\rho_n\star k_m^o\star(\theta_n-\theta)\|_{L^\infty}
      \leq \|k_m^o\star(\theta_n-\theta)\|_{L^\infty}
      \leq c R^{-\alpha},
  \]
  for a number $\alpha>0$, where $c$ depends on $\theta_0$.
  Since $\supp\rho\subset B_1(0)$,
  \[
    |\rho_n\star k_m^i\star(\theta_n-\theta)(x)|
      \leq \sup_{y\in B_{\epsilon_n}(x)}
        |k_m^i\star(\theta_n-\theta)(y)|.
  \]
  Using the H\"older inequality with $q<\frac2{3-m}$
  and $\frac1p+\frac1q=1$ (therefore $p>\frac2{m-1}$),
  \[
    \begin{aligned}
      |k_m^i\star(\theta_n-\theta)(y)|
        &\leq \|k_m^i\|_{L^q}
          \|\uno_{B_R(y)}(\theta_n-\theta)\|_{L^p}\\
        &\leq c_R\|\uno_{B_{R+1}(x)}(\theta_n-\theta)\|_{L^p},
    \end{aligned}
  \]
  since $y\in B_{\epsilon_n}(x)$ and, for $n$ large
  enough, $\epsilon_n\leq1$. In conclusion,
  \[
    \int_0^T |\rho_n\star k_m\star(\theta_n-\theta)(x)|^p\,dx
      \lesssim R^{-\alpha p} +
        c_R\int_0^T\|\uno_{B_{R+1}(x)}(\theta_n-\theta)\|_{L^p}^p\,dt.
  \]
  By first taking the $\limsup$ in $n\to\infty$ (using the
  first step of the proof), and then the limit $R\uparrow\infty$,
  the claim follows.

  \emph{Step 3: conclusion}.
  The convergence properties in the first two steps
  allows immediately to prove that $\theta$ is a
  weak solution.
\end{proof}
In the analysis of the connection between solutions
of \eqref{e:gSQG} and the point vortex motion
we will need some additional properties
of solutions to \eqref{e:gSQG}.
\begin{corollary}\label{c:weak}
  Let $\theta_0\in L^1(\R^2)\cap L^\infty(\R^2)$,
  and let $\theta$ be a solution to \eqref{e:inviscid}
  with initial condition $\theta_0$ obtained as
  in Theorem~\ref{t:weak} above. Then the
  following statements hold.
  \begin{itemize}
    \item If $\theta_0\geq0$ {a.\,e.}, then $\theta(t)\geq0$ {a.\,e.}
      for all $t>0$.
    \item If $\int_{\R^2}|x|\,|\theta_0(x)|\,dx<\infty$, then
      \[
        \begin{aligned}
          &\sup_{[0,T]}\int_{\R^2}|x|\,|\theta(t,x)|\,dx
            <\infty,
              \qquad\text{for all }T>0,\\
          &\int_{\R^2}\theta(t,x)\,dx
            = \int_{\R^2} \theta_0(x)\,dx,
              \qquad\text{for all }t\geq0.
        \end{aligned}
      \]
    \item If $\int_{\R^2}|x|^2\,|\theta_0(x)|\,dx<\infty$, then
      \[
        \begin{aligned}
          &\sup_{[0,T]}\int_{\R^2}|x|^2\,|\theta(t,x)|\,dx
            <\infty,
              \qquad\text{for all }T>0,\\
          &C_{\theta(t)}
            = \int_{\R^2}x\,\theta(t,x)\,dx
            = \int_{\R^2}x\,\theta_0(x)\,dx,
            = C_{\theta_0},
              \qquad\text{for all }t\geq0,\\
          &J_{\theta(t)}
            \leq J_{\theta_0}
        \end{aligned}
      \]
  \end{itemize}
\end{corollary}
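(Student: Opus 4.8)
The plan is to establish every assertion first for the smooth approximations $\theta_\epsilon$ of Proposition~\ref{p:ainviscid}, which are classical solutions of \eqref{e:ainviscid} transported by the divergence-free velocity $u_\epsilon=k_m^\epsilon\star\theta_\epsilon$, and then to pass to the limit along the subsequence $\theta_n=\theta_{\epsilon_n}$ constructed in Theorem~\ref{t:weak}, exploiting its strong convergence to $\theta$ in $C([0,T];L^p_{\text{loc}}(\R^2))$. The sign statement is then immediate: Proposition~\ref{p:ainviscid} gives $\theta_\epsilon(t)\geq0$ whenever $\theta_0\geq0$, and strong $L^1_{\text{loc}}$ convergence (hence a.e.\ along a further subsequence) preserves non-negativity in the limit.

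For the moment bounds I would use the Lagrangian description. Let $\Phi^\epsilon_t$ be the (volume-preserving) forward flow of $u_\epsilon$; then both $\theta_\epsilon(t,\cdot)$ and $|\theta_\epsilon(t,\cdot)|$ are transported, so that $\int g\,|\theta_\epsilon(t)|\,dx=\int (g\circ\Phi^\epsilon_t)\,|\theta^\epsilon_0|\,dx$ for any weight $g$. The key uniform input is $\|u_\epsilon\|_{L^\infty}\leq M$ with $M$ independent of $\epsilon$, which follows from Young's inequality, $\|u_\epsilon\|_{L^\infty}\le\|k_m\star\theta_\epsilon\|_{L^\infty}$, together with Lemma~\ref{l:mapping} and \eqref{e:painviscid}. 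Hence $|\Phi^\epsilon_t(y)-y|\leq Mt$, giving $\int|x|^k|\theta_\epsilon(t)|\,dx\leq\int(|y|+Mt)^k|\theta^\epsilon_0|\,dy$ for $k=1,2$, which is bounded on $[0,T]$ uniformly in $\epsilon$ because the corresponding moments of $\theta^\epsilon_0=\rho_\epsilon\star\theta_0$ converge to those of $\theta_0$. These uniform bounds yield finiteness of the first and second moments of $\theta(t)$ by Fatou, and, more importantly, \emph{tightness}: $\int_{|x|>R}(1+|x|)\,|\theta_\epsilon(t)|\,dx$ is small uniformly in $\epsilon$ and $t\in[0,T]$.

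Mass and centre conservation then follow by combining exact conservation at the approximate level with this tightness. At the approximate level mass is conserved, $\int\theta_\epsilon(t)=\int\theta^\epsilon_0$, and the self-interaction in the centre evolution vanishes, $\frac{d}{dt}C_{\theta_\epsilon}=\int u_\epsilon\theta_\epsilon\,dx=0$, because $k_m^\epsilon=\rho_\epsilon\star k_m$ is \emph{odd} (as $\rho$ is symmetric and $k_m(z)=-k_m(-z)$), so that $\iint k_m^\epsilon(x-y)\theta_\epsilon(x)\theta_\epsilon(y)\,dx\,dy$ is antisymmetric and thus zero. To pass to the limit I split each integral over $B_R$ and its complement: on $B_R$ the strong $L^1_{\text{loc}}$ convergence applies, while the tails are controlled by $\int_{B_R^c}|\theta_\epsilon|\le R^{-1}\int|x||\theta_\epsilon|$ and $\int_{B_R^c}|x||\theta_\epsilon|\le R^{-1}\int|x|^2|\theta_\epsilon|$; letting first $\epsilon\to0$ and then $R\to\infty$ gives $\int\theta(t)=\int\theta_0$ and $C_{\theta(t)}=C_{\theta_0}$.

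The moment of inertia is the delicate point. Since $\theta_\epsilon^2$ is transported and $u_\epsilon$ is divergence free, and the centre $c=C_{\theta_\epsilon}$ is fixed, one finds $\frac{d}{dt}J_{\theta_\epsilon}=2\int(x-c)\cdot u_\epsilon\,\theta_\epsilon^2\,dx=2\iint(x-c)\cdot k_m^\epsilon(x-y)\,\theta_\epsilon(x)^2\theta_\epsilon(y)\,dx\,dy$. The structural fact I would exploit is that the \emph{true} kernel is perpendicular to its argument, $(x-y)\cdot k_m(x-y)=0$ (as $k_m(z)$ is parallel to $z^\perp$), which lets one symmetrise the interaction after replacing $(x-c)$ by $(y-c)$; the same symmetrisation shows that the signed weighted moment $\int|x-c|^2\theta\,dx$ is exactly conserved for the true kernel, and, combined with $\|\theta(t)\|_{L^\infty}\le\|\theta_0\|_{L^\infty}$, keeps $J_{\theta(t)}\le\|\theta_0\|_{L^\infty}\int|x-c|^2|\theta(t)|\,dx$ finite via the second-moment bound. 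I expect the genuine obstacle to be that this exact cancellation holds only for $k_m$, whereas the approximation runs with $k_m^\epsilon$, for which perpendicularity fails by a regularisation error that disappears only as $\epsilon\to0$; the cleanest way to recover the inequality (rather than an equality) is therefore to obtain it in the limit from the weak lower semicontinuity of the convex functional $f\mapsto\int|x-c|^2 f^2\,dx$, matched against the upper bound produced by the approximate identities. Reconciling this regularisation error in the self-interaction term with the clean bound $J_{\theta(t)}\le J_{\theta_0}$ is the step I would budget the most care for.
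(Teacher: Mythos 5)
Everything up to the moment of inertia is correct and is essentially the paper's own argument. The only real difference is cosmetic: the paper runs the moment estimates in Eulerian form, bounding $\tfrac{d}{dt}\int|x|^k|\theta_n|\,dx$ by integration by parts against the uniform bound on $\|u_n\|_{L^\infty}$ from Lemma~\ref{l:mapping}, whereas you integrate along the volume-preserving flow; the two are interchangeable, and your passage to the limit for the mass and the centre (splitting $B_R$ against its complement and controlling the tails by the uniform moment bounds) is exactly what the paper does.

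The gap is in the last item. First, the symmetrisation you invoke does not apply to the quantity you differentiate: in $\iint (x-c)\cdot k_m(x-y)\,\theta_\epsilon(x)^2\theta_\epsilon(y)\,dx\,dy$ the density $\theta_\epsilon(x)^2\theta_\epsilon(y)$ is not symmetric under $x\leftrightarrow y$, so swapping variables does not produce the factor $(x-y)\cdot k_m(x-y)$, and there is no cancellation ``after replacing $(x-c)$ by $(y-c)$'' even for the exact kernel. Second, your fallback --- weak lower semicontinuity of $f\mapsto\int|x-c|^2f^2\,dx$ ``matched against the upper bound produced by the approximate identities'' --- has nothing to be matched against: since $\theta_\epsilon$ is transported by a measure-preserving flow, $\int g\,\theta_\epsilon(t)^2\,dx=\int (g\circ\Phi^\epsilon_t)\,(\theta_0^\epsilon)^2\,dx$, and for $g(x)=|x-c|^2$ this is neither conserved nor monotone at the approximate level. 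The paper's route avoids both problems: it differentiates the \emph{first-power} moment $\int|x|^2\theta_n\,dx$ (consistent with how $J_\epsilon$ is actually used in Proposition~\ref{p:singleblob}, which suggests the exponent on $|f|$ in \eqref{e:inertia} is not what is being estimated), whose self-interaction term $\iint x\cdot k_m^{\epsilon_n}(x-y)\theta_n(x)\theta_n(y)\,dx\,dy$ carries a symmetric density, so that symmetrisation together with the oddness of the kernel and $x\cdot k_m(x)=0$ kills it; combined with conservation of mass and of the centre this conserves $\int|x-C|^2\theta_n\,dx$ exactly for each $n$, and $J_{\theta(t)}\leq J_{\theta_0}$ then follows in the limit by semicontinuity. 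Your instinct that the mollified kernel is a genuine issue is not wrong --- $z\cdot(\rho_\epsilon\star k_m)(z)=0$ does not follow from symmetry of $\rho$ alone, and the paper is terse about it --- but that is a much smaller crack than the one your symmetrisation opens, and your proposal as written does not close either.
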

\begin{proof}
  As in the proof of the previous theorem
  there is a sequence $(\theta_n)_{n\geq1}$,
  with $\theta_n=\theta_{\epsilon_n}$
  and $u_n=u_{\epsilon_n}$, of
  solutions to \eqref{e:ainviscid}, with regularized initial
  condition, such that $\theta_n\to\theta$ and $u_n\to u$ as
  in the proof of Theorem~\ref{t:weak}. Positivity is
  straightforward by Proposition~\ref{p:ainviscid}.

  Assume $\int_{\R^2}|x|\,|\theta_0(x)|\,dx<\infty$. By
  integration by parts,
  \[
    \begin{multlined}[.9\linewidth]
      \frac{d}{dt}\int_{\R^2}|x|\,|\theta_n|\,dx
        = \int_{\R^2} |x|\sgn(\theta_n)\partial_t\theta_n\,dx
        = - \int_{\R^2} |x| u_n\cdot\nabla|\theta_n|\,dx\\
        = \int_{\R^2} |\theta_n| u_n\cdot\nabla |x|\,dx
        \leq \int_{\R^2} |u_n|\,|\theta_n|\,dx,
    \end{multlined}
  \]
  and the last term on the right hand side is uniformly
  bounded by a number that depends only on $\theta_0$.
  This proves the first claim and that
  $(\theta_n)_{n\geq1}$ is uniformly integrable.
  By \eqref{e:amass} conservation of mass
  follows for $\theta$.

  Assume $\int_{\R^2}|x|^2\,|\theta_0(x)|\,dx<\infty$. By
  integration by parts,
  \[
    \begin{multlined}[.9\linewidth]
      \frac{d}{dt}\int_{\R^2}|x|^2|\theta_n|\,dx
        = - \int_{\R^2} |x|^2 u_n\cdot\nabla|\theta_n|\,dx\\
        = \int_{\R^2} |\theta_n| u_n\cdot\nabla |x|^2\,dx
        \leq 2\|u_n\|_{L^\infty}\int_{\R^2} |x|\,|\theta_n|\,dx,
    \end{multlined}
  \]
  and, by the previous considerations, the last term on
  the right hand side is uniformly bounded, on a finite
  time interval $[0,T]$, by a number that depends only on
  $\theta_0$ and $T$. This estimates implies that
  $(x\mapsto x\,\theta_n(t,x))_{n\geq1}$ is uniformly
  integrable, therefore $C_{\theta_n(t)}\to C_{\theta(t)}$
  for all $t$. Actually, uniform convergence holds, since
  for $T>0$ and $t\in[0,T]$,
  \[
    \begin{aligned}
      |C_{\theta_n(t)}-C_{\theta(t)}|
        &\leq \int_{B_R(0)}|x|\,|\theta_n(t)-\theta(t)|\,dx
          + \int_{B_R(0)^c}|x|\,|\theta_n(t)-\theta(t)|\,dx\\
        &\leq R\sup_{[0,T]}\|(\theta_n-\theta)\uno_{B_R(0)}\|_{L^1}
          + \frac1R\int_{\R^2}|x|^2(|\theta_n(t)|+|\theta(t)|)\,dx.
    \end{aligned}
  \]
  The second term on the right hand side is uniformly bounded
  in $n$ and $t\in[0,T]$. By first taking the limit
  $n\to\infty$ and then $R\uparrow\infty$, uniform convergence
  follows. We have
  \[
    \frac{d}{dt} C_{\theta_n(t)}
      = \int_{\R^2} x\,\partial_t\theta_n\,dx
      = - \int_{\R^2} x\, u_n\cdot\nabla\theta_n\,dx\\
      = \int_{\R^2} \theta_n u_n\,dx
      = 0,
  \]
  since $\rho_n$ is symmetric and $k_m$ is anti-symmetric.
  This proves conservation of the centre of pseudo-vorticity
  for $\theta$. Likewise,
  \[
    \frac{d}{dt}\int_{\R^2} |x|^2\theta_n\,dx
      = - \int_{\R^2} |x|^2 u_n\cdot\nabla\theta_n\,dx\\
      = 2\int_{\R^2} \theta_n u_n\cdot x\,dx
      = 0,
  \]
  since $\rho_n$ is symmetric, $k_m$ is anti-symmetric,
  and $x\cdot k_m(x)=0$. By semicontinuity and the
  conservation of mass and centre proved before,
  we obtain $J_{\theta(t)}\leq J_{\theta_0}$.
\end{proof}
In Section~\ref{s:pointmotion} we will single out the evolution
of a single vortex blob and consider the velocity field generated
by all other blobs as an external field. To this end the following
slight modification of the previous results will be useful.
\begin{corollary}\label{c:weakF}
  Let $\theta_0\in L^1(\R^2)\cap L^\infty(\R^2)$,
  and $F:[0,\infty)\times\R^2\to\R^2$ be a bounded
  field. Then there is a solution to
  \[
    \begin{cases}
      \partial_t\theta
          + \Div((u+F)\theta)
        = 0,\\
      u = k_m\star\theta,
    \end{cases}
  \]
  such that \eqref{e:pinviscid} holds. Moreover,
  the conclusions of Corollary~\ref{c:weak} also
  hold, with the exception of the evolution of
  the vortex centre and the moment of inertia,
  that are replaced by the following formulas,
  \[
    \begin{aligned}
      C_{\theta(t)}
        &= C_{\theta_0}
          + \int_0^t\int_{\R^2}\theta(s,x)F(s,x)\,dx\,ds,\\
      J_{\theta(t)}
        &\leq J_{\theta_0}
          + 2\int_0^t\int_{\R^2}\theta(s,x)(x-C_{\theta(s)})\cdot F(s,x)\,dx\,ds .
    \end{aligned}
  \]
\end{corollary}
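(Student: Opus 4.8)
The plan is to mirror the proof of Corollary~\ref{c:weak} almost verbatim, introducing the external field $F$ only where it genuinely enters. First I would set up the same two-parameter regularization used throughout the section: regularize both the velocity kernel ($k_m^\epsilon$) and the initial datum, solve the approximate problem $\partial_t\theta_\epsilon + \Div((u_\epsilon+F)\theta_\epsilon)=0$ with $u_\epsilon=k_m^\epsilon\star\theta_\epsilon$, and pass to the limit. The existence and uniqueness of the regularized solutions follows exactly as in Proposition~\ref{p:ainviscid}, because $F$ is bounded and contributes a globally Lipschitz (in fact, only bounded is needed for the a priori estimates) drift to the characteristic system \eqref{e:characteristic}; crucially $F$ need not be divergence free, but since $\theta_\epsilon$ is transported along characteristics of $u_\epsilon+F$, the $L^\infty$ bound \eqref{e:painviscid} is preserved by the maximum principle, and the $L^1$ bound by conservation along the (now possibly volume-distorting) flow combined with positivity. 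The compactness argument of Theorem~\ref{t:weak} (Aubin--Lions, strong convergence of $\theta_n$ and of $u_n$) carries over unchanged, since $F$ does not affect the mapping properties of $k_m$ nor the a priori $L^p$ bounds.

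The substantive new content is the two modified conservation laws. For the centre of pseudo-vorticity I would compute, at the regularized level,
\[
  \frac{d}{dt}C_{\theta_n(t)}
    = \int_{\R^2}x\,\partial_t\theta_n\,dx
    = -\int_{\R^2}x\,(u_n+F)\cdot\nabla\theta_n\,dx
    = \int_{\R^2}\theta_n(u_n+F)\,dx.
\]
The $u_n$ contribution vanishes exactly as in Corollary~\ref{c:weak} (by symmetry of $\rho_n$ and antisymmetry of $k_m$), leaving $\frac{d}{dt}C_{\theta_n(t)}=\int_{\R^2}\theta_n F\,dx$; integrating in time and passing to the limit (using the uniform integrability of $x\mapsto x\,\theta_n$ established exactly as before, together with the strong convergence of $\theta_n$ and boundedness of $F$) gives the stated formula for $C_{\theta(t)}$. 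For the moment of inertia I would similarly compute
\[
  \frac{d}{dt}\int_{\R^2}|x-C_{\theta_n(t)}|^2\,\theta_n\,dx,
\]
expanding the square and using $x\cdot k_m(x)=0$ to kill the self-interaction term; the external field produces the remaining contribution $2\int_{\R^2}\theta_n(x-C_{\theta_n})\cdot F\,dx$, and after passing to the limit by lower semicontinuity (to handle the passage from $\int|x|^2\theta_n^2$ to $\int|x|^2\theta^2$ as in the original corollary) one obtains the claimed inequality $J_{\theta(t)}\leq J_{\theta_0}+2\int_0^t\int\theta(x-C_{\theta(s)})\cdot F\,dx\,ds$.

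The main obstacle, and the only place requiring genuine care beyond transcribing Corollary~\ref{c:weak}, is controlling the moment-of-inertia and first-moment integrals uniformly in $n$ now that $F$ enters the drift. In the field-free case the bound $\frac{d}{dt}\int|x|\,|\theta_n|\,dx\leq\int|u_n|\,|\theta_n|\,dx$ relied only on $\|u_n\|_{L^\infty}$ being controlled via Lemma~\ref{l:mapping}; with $F$ present one instead gets $\frac{d}{dt}\int|x|\,|\theta_n|\,dx\leq\int(|u_n|+|F|)|\theta_n|\,dx$, which is still uniformly bounded since $F$ is bounded and $\|\theta_n\|_{L^1}$ is controlled. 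The delicate point is that $J_{\theta(t)}$ is defined with $C_{\theta(t)}$ rather than a fixed centre, so $C_{\theta_n(t)}$ now moves; I would therefore first establish the uniform convergence $C_{\theta_n(t)}\to C_{\theta(t)}$ on $[0,T]$ (by the same truncation estimate used in Corollary~\ref{c:weak}, which only needs the uniform second-moment bound and strong $L^1_{\text{loc}}$ convergence), and only then pass to the limit in the moment-of-inertia identity, invoking lower semicontinuity for the quadratic term and dominated convergence for the linear-in-$F$ correction term. Everything else is routine and follows the template already laid out in the preceding two proofs.
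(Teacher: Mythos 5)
Your overall strategy is exactly the one the paper intends: the paper gives no proof of this corollary at all, treating it as a routine modification of Proposition~\ref{p:ainviscid}, Theorem~\ref{t:weak} and Corollary~\ref{c:weak}, and your transcription of those arguments --- the two-parameter regularization, the Aubin--Lions compactness, the computation of $\tfrac{d}{dt}C_{\theta_n}$ with the $u_n$-contribution killed by antisymmetry of $k_m$, the recentred second-moment identity, and the limit passage via uniform integrability and semicontinuity --- is the right one.

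There is, however, one concrete error: your parenthetical claim that $F$ ``need not be divergence free'' and that the $L^\infty$ bound is nonetheless ``preserved by the maximum principle.'' It is not. For the divergence-form equation $\partial_t\theta+\Div((u+F)\theta)=0$ one has, along the characteristics of $u+F$, $\tfrac{d}{ds}\theta(s,Y_s)=-(\Div F)(s,Y_s)\,\theta(s,Y_s)$, so if $\Div F\neq0$ the solution is amplified by a factor of order $\e^{\,t\|\Div F\|_{L^\infty}}$; correspondingly the $L^p$ computation gives $\tfrac{d}{dt}\|\theta\|_{L^p}^p=(1-p)\int_{\R^2}|\theta|^p\,\Div F\,dx$, which has no sign, and \eqref{e:pinviscid} fails for every $p>1$. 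Only the $L^1$ norm survives without the divergence-free assumption. Since the corollary explicitly asserts \eqref{e:pinviscid}, you must take $\Div F=0$ (together with enough regularity in $x$, say Lipschitz continuity, for the characteristics of the regularized problem to be well defined); this costs nothing, because in every application in the paper --- the fields $F_\epsilon$ of Proposition~\ref{p:singleblob} and the fields $F_{\epsilon,j}=\sum_{\ell\neq j}u_{\epsilon,\ell}$ in the proof of Theorem~\ref{t:main} --- $F$ is divergence free and uniformly Lipschitz. With that hypothesis restored, the rest of your argument goes through as written.
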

\section{The point-vortex motion}\label{s:pointmotion}

We turn to the main problem, the validation of the
point-vortex motion system \eqref{e:motion}
in terms of solutions to \eqref{e:gSQG}.

For the Euler equations ($m=2$ in our setting)
these results have been already established
and are somewhat classical, see \cite{MarPul1994}.

We will look rigorously only at the case of the
whole plane as a motivation for the validity
of the system of evolution for point vortices.
The extension of these results to the torus
is straightforward, since conservation of
the centre of pseudo-vorticity and of the
moment of inertia still hold. The presence
of boundaries makes the problem more difficult
and it is not examined here.
\subsection{Global solutions for the point vortex motion}

Our first step to motivate the point vortex motion system~\eqref{e:motion},
is to show that it gives a well defined
dynamics, at least for a large enough set of initial
conditions. Here we follow the approach used for the
Euler equations in \cite{MarPul1984,MarPul1994}.

The point vortex motion \eqref{e:motion}
is given by
\[
  \dot X_j
    = \sum_{k\neq j}\gamma_k\nabla^\perp G_m(X_j,X_k),
      \qquad j=1,2,\dots,N,
\]
where $G_m$ is the Green
function of $(-\Delta)^{\frac{m}2}$
on the whole space, see \eqref{e:green}.
The motion is Hamiltonian,
described by the Hamiltonian
\begin{equation}\label{e:hamiltonian}
  H(X_1,X_2,\dots,X_N,\gamma_1,\gamma_2,\dots,\gamma_N)
    = \frac12\sum_{j\neq k}\gamma_j\gamma_k G_m(X_j,X_k),
\end{equation}
where  $\gamma_1 \ldots \gamma_N$ are vortex intensities,
in the sense that the above system
can be written as
\[
  \begin{cases}
    \gamma_j\dot X_{j,1}
      = \frac{\partial H}{\partial X_{j,2}},\\
    \gamma_j\dot X_{j,2}
      = - \frac{\partial H}{\partial X_{j,1}},
  \end{cases}
  \qquad j=1,2,\dots,N.
\]
Therefore the Hamiltonian $H$ is conserved along
the motion \eqref{e:motion}. Moreover, since
the Hamiltonian is translation invariant and
rotation invariant, the vortex centre
\begin{equation}\label{e:vcentre}
  C
    = \sum_{j=1}^N \gamma_j X_j,
\end{equation}
and the moment of inertia
\[
  J
    = \sum_{j=1}^N \gamma_j|X_j|^2.
\]
are also conserved.
Assume initially that all vortex intensities
are positive (or all negative). Then by the
conservation of the Hamiltonian there cannot
be collapse. Additionally, by the
conservation of the moment of inertia
there cannot be explosion, namely
that one or more vortices reach
infinity in finite time.
For the same reasons, even with
vortices of different signs, there
cannot be collapse or explosion for
one or two vortices. For more
than two vortices singularities
are possible, see \cite{BadBar2018}.

Our main assumption, the same in \cite{MarPul1994} for the
case $m=2$, for the existence of a global flow
for almost every initial condition is
\begin{equation}\label{e:maingamma}
  \sum_{j \in J} \gamma_j
    \neq 0
      \qquad \textup{for all } J \subset \{1,2,\dots,N\}.
\end{equation}
The main theorem is as follows.
A version of this result on the torus
can be found in \cite{FlaSaa2018}.
\begin{theorem}\label{t:existence}
  Fix $1<m<2$ and assume \eqref{e:maingamma}.
  Then, outside a set of initial conditions of Lebesgue measure zero,
  the initial value problem associated to the vortex equation
  \eqref{e:motion} has a global smooth solution.
\end{theorem}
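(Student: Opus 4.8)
The plan is to follow the Marchioro--Pulvirenti strategy used for the Euler case, adapting it to the more singular kernel $k_m=\nabla^\perp G_m$, which behaves like $|x|^{m-3}$ near the origin. First I would record local well-posedness: on the open set $\mathcal O=\{(x_1,\dots,x_N)\in(\R^2)^N : x_i\neq x_j \text{ for } i\neq j\}$ the vector field in \eqref{e:motion} is smooth (indeed real-analytic off the diagonal), so Cauchy--Lipschitz gives a unique maximal smooth solution on some $[0,T^*)$, and if $T^*<\infty$ the trajectory must leave every compact subset of $\mathcal O$. The first reduction is a blow-up criterion: since $|\dot X_j|\lesssim\sum_{k\neq j}|\gamma_k|\,|X_j-X_k|^{m-3}$, as long as the minimal separation $d(t)=\min_{i\neq j}|X_i(t)-X_j(t)|$ stays bounded below on $[0,T^*)$ the velocities are bounded, hence the positions remain in a compact set; therefore a finite-time singularity forces $\liminf_{t\uparrow T^*}d(t)=0$, i.e. a collision. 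The problem then becomes to show that the set of initial data whose trajectory collides in finite time is Lebesgue-null.

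The two tools are the conservation laws and measure invariance. Because the motion is Hamiltonian for the symplectic form $\sum_j\gamma_j\,dX_{j,1}\wedge dX_{j,2}$, the Hamiltonian $H$ of \eqref{e:hamiltonian}, the vortex centre $C$ of \eqref{e:vcentre} and the moment of inertia $J=\sum_j\gamma_j|X_j|^2$ are conserved, and the flow $\Phi_t$ preserves Lebesgue measure on $(\R^2)^N$ (Liouville). I would first dispose of the total collapse of all $N$ vortices to a single point $z$: conservation of $C$ forces $z=C/\Gamma$ with $\Gamma=\sum_j\gamma_j\neq 0$ by \eqref{e:maingamma}, and the conserved relative moment of inertia $\sum_j\gamma_j|X_j-C/\Gamma|^2=J-|C|^2/\Gamma$ would have to vanish at collapse; being constant in time, it can vanish only if it vanishes at $t=0$, so total collapse occurs only from the algebraic, hence null, set where this quadratic quantity is zero initially.

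The heart of the matter, and the main obstacle, is the collapse of a proper subcluster $J\subsetneq\{1,\dots,N\}$ while the remaining vortices stay separated. Here \eqref{e:maingamma} is essential: since $\Gamma_J=\sum_{j\in J}\gamma_j\neq 0$ the barycentre $b_J=\Gamma_J^{-1}\sum_{j\in J}\gamma_j X_j$ is well defined, the internal forces cancel in $\dot b_J$ by antisymmetry of $k_m$, and the remaining cross terms involve only external distances that stay bounded below, so $b_J$ moves with bounded velocity during the collapse. Thus the cluster behaves like an isolated $|J|$-vortex system subject to a smooth bounded external field. I would run an induction on $N$ (with $N=1$ trivial and $N=2$ settled by the fact that two vortices keep their mutual distance constant, since $k_m$ is orthogonal to its argument), reducing a collapse of $J$ to a collision of a smaller perturbed system. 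To convert this into a measure statement I would couple a virial-type identity for the cluster moment of inertia about $b_J$ with the measure preservation of $\Phi_t$ and a Fubini argument over time and over the external degrees of freedom, showing that the initial data producing such a collapse project into a null set. The delicate points are the uniformity of the cluster estimates as $d(t)\to 0$ for the singular exponent $m-3$, and the bookkeeping needed to handle several simultaneous clusters; once these are controlled, summing over the finitely many possible cluster partitions $J$ shows that the collision set is null, so the solution is global outside a set of measure zero.
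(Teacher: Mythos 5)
Your reduction to collisions, the list of conserved quantities, Liouville invariance, and the elimination of total collapse are all sound and consistent with the Marchioro--Pulvirenti framework. The genuine gap is in the central step, the collapse of a proper subcluster. Your plan is to rule it out by an induction that treats the cluster as an isolated system in a bounded external field, controlled by ``a virial-type identity for the cluster moment of inertia about $b_J$''. This cannot work as a deterministic exclusion: for $N\geq 3$ finite-time collapse \emph{does} occur for some initial data compatible with \eqref{e:maingamma} (the paper cites Badin--Barry for exactly this), so no conservation-law or virial argument can forbid it; one can only show the collapse set is null. And the quantity you propose does not produce a null set either: the cluster moment of inertia about $b_J$ has bounded drift, hence stays close to its initial value, but for mixed-sign intensities it need not tend to zero at collapse, so there is no contradiction and no quantitative smallness of the bad set. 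Your appeal to ``measure preservation plus Fubini'' gestures at the right tool but is attached to the wrong functional, and your base case ($N=2$ keeps its distance constant) fails once the external field is present, so the induction does not close.

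The mechanism the paper actually uses is different and is where the hypotheses $1<m<2$ enter quantitatively. One regularizes $G_m$ at scale $\epsilon$ so that the flow $X^\epsilon$ is global and measure preserving, and observes that on $B_R^N$ the regularized and true dynamics have the same sublevel sets $\{D_T\geq\epsilon\}=\{D_T^\epsilon\geq\epsilon\}$. Then one integrates the \emph{Hamiltonian-type} functional $\Phi_\epsilon=\tfrac12\sum_{i\neq j}G_m^\epsilon(x_i-x_j)$ along the flow: its time derivative is bounded by $h=\sum|X_i-X_j|^{-(3-m)}|X_i-X_k|^{-(3-m)}$, which is locally integrable on $(\R^2)^N$ precisely because $m>1$; combining this with the confinement estimate (this is where \eqref{e:maingamma} is used, via conservation of the centre \eqref{e:vcentre}, to get a uniform bound on displacements) and Liouville gives $\int_{B_R^N}\sup_{[0,T]}|\Phi_\epsilon(X^\epsilon_t)|\,dx\leq C(T,R_\star)$ uniformly in $\epsilon$. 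Since a separation below $\epsilon$ forces $\Phi_\epsilon\gtrsim\epsilon^{m-2}$ (here $m<2$ is used, so $G_m$ blows up at the origin), Chebyshev bounds the measure of $\{D_T^\epsilon<\epsilon\}\cap B_R^N$ by $O(\epsilon^{2-m})\to0$. This Chebyshev step on the integrated Hamiltonian, together with the $\epsilon$-regularization that makes the flow globally defined before any integration over initial data, is the missing idea in your proposal; without it the claim that the collision set is null is not established.
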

\begin{proof}
  The theorem can be proved similarly to \cite[Corollary 2.2, Ch. 4]{MarPul1994},
  We outline some of the main steps.

  First of all we regularize the dynamics, to handle the singularity.
  Let $G_m^\epsilon$ be a $C^{\infty}(\R^2)$ function such that
  \begin{itemize}
    \item $G_m^\epsilon = G_m$ for $|x|\geq\epsilon$,
    \item $0 \leq G_m^\epsilon\leq G_m$,
    \item $|\nabla G_m^\epsilon|\lesssim |\nabla G_m|$.
  \end{itemize}
  The regularized dynamics $X^\epsilon$ defined by the Hamiltonian
  obtained by \eqref{e:hamiltonian} by replacing $G_m$
  with $G_m^\epsilon$ is well defined and global.
  Moreover, as long as the particles in the
  regularized dynamics are at a distance
  of at least $\epsilon$, their motion coincide
  with the original motion given by \eqref{e:motion}.

  The first step is to prove a uniform estimate on
  non-collision. The following claim can be
  proved as Theorem 2.1 (chapter 4) of \cite{MarPul1994},
  with no substantial difference between
  the case with value $m=2$ (discussed in
  the reference) and the case $1<m<2$.
  \begin{quote}
    \slshape
    There exists a number $c>0$ independent
    of $\epsilon$ and of the initial condition,
    such that
    \[
      \max_{1\leq j\leq N}\sup_{t\in [0,T]}|X_j^\epsilon(t) - X_j^\epsilon(0)|
        \leq c.
    \]
  \end{quote}
  The proof is based essentially on the conservation
  of the vortex centre, defined as
  in \eqref{e:vcentre}. Here the assumption
  \eqref{e:maingamma} is essential, while it
  is only required that $|\nabla G_m|$
  goes to zero at infinity\footnote{So in
  principle every $m\leq 2$ is allowed.}.

  The previous claim implies that
  \begin{quote}
    \slshape
    For every $R,T>0$, there is $R_\star>0$
    such that $N$ vortices that start in $B_R(0)$
    and evolve with the regularized dynamics,
    cannot leave $B_{R_\star}(0)$ within time $T$,
    for every initial data and every $\epsilon\in(0,1)$.
  \end{quote}

  Define
  \[
    D_T(x_1,\dots,x_N)
      = \min_{i\neq j}\min_{t\in[0,T]}|X_i(t)-X_j(t)|,
  \]
  where $X(\cdot)$ is the dynamics \eqref{e:motion}
  with initial condition $(x_1,\dots,x_N)$. Define
  similarly $D_T^\epsilon(x_1,\dots,x_N)$ for
  the regularized dynamics. To prove the theorem,
  it is sufficient to prove that the set
  $\{D_T(x_1,\dots,x_N)=0\}$ has Lebesgue measure zero.
  To this end, it suffices to prove that
  the measure of
  $\{D_T(x_1,\dots,x_N)<\epsilon\}\cap B_R^N$
  converges to $0$ as $\epsilon\downarrow0$
  for all $R$, where $B_R^N$ is the product of
  $N$-times the ball $B_R(0)$. But since
  \[
    \{D_T(x_1,\dots,x_N)\geq\epsilon\}\cap B_R^N
      = \{D_T^\epsilon(x_1,\dots,x_N)\geq\epsilon\}\cap B_R^N
  \]
  this is the same as proving that
  the measure of
  $\{D_T^\epsilon(x_1,\dots,x_N)<\epsilon\}\cap B_R^N$
  goes to $0$ as $\epsilon\downarrow0$.

  Define
  \[
    \Phi_\epsilon(x_1,\dots,x_N)
      = \frac12\sum_{i\neq j}G_m^\epsilon(x_i-x_j),
  \]
  and let $X^\epsilon$ be the solution to
  the regularized dynamics with initial
  conditions $x_1,\dots,x_N$. A simple
  computation yields
  \[
    \frac{d}{dt}\Phi_\epsilon(X_1^\epsilon(t),\dots,X_N^\epsilon(t))
      \leq h(X_t^\epsilon)
      \eqdef\sum_{i\neq j,j\neq k,k\neq i}
        \frac1{|X_i^\epsilon-X_j^\epsilon|^{3-m}|X_i^\epsilon-X_k^\epsilon|^{3-m}},
  \]
  where the most singular term has disappeared
  due to the product $\nabla\cdot\nabla^\perp$
  being zero.
  Notice that since $m>1$, $\Phi_\epsilon$ and
  $h$ are in $L^1_\text{loc}$, and the integral
  of $\Phi_\epsilon$ and $h$ over bounded sets is
  independent of $\epsilon$. Using the
  invariance of the Lebesgue measure
  with respect to the regularized dynamics
  and the second claim above,
  \[
    \begin{multlined}[.95\linewidth]
      \int_{B_R^N}\sup_{[0,T]}|\Phi_\epsilon(X^\epsilon_t)|\,dx_1\dots\,dx_N
        \leq \int_{B_R^N}|\Phi_\epsilon(x_1,\dots,x_N)|\,dx_1\dots\,dx_N\\
          + T\int_{B^N_{R_\star}}h(x_1,\dots,x_N)\,dx_1\dots\,dx_N
        \qedef C(T,R_\star).
    \end{multlined}
  \]
  Finally, $\{D_T^\epsilon(x_1,\dots,x_N)<\epsilon\}\cap B_R^N
  \subset\{\sup_{[0,T]}|\Phi_\epsilon(X^\epsilon_t)|\geq\tfrac12\epsilon^{m-2}\}$,
  and the measure of the set on the right hand side,
  by the Chebychev inequality, is bounded by
  $2\epsilon^{2-m}C(T,R_\star)$ and thus converges to $0$.
\end{proof}
\subsection{Vortex approximation}

In this section we prove that vortices provide an
approximation of solutions to \eqref{e:gSQG}.
These results are classical for $m=2$,
see \cite{MarPul1994},
and have been recently proved for $m\in(2,3)$ in \cite{Hau2009}.

First, we set up the initial conditions for
the approximation.
Let $\theta_0\in L^1(\R^2)\cap L^\infty(\R^2)$,
with $\int_{\R^2}|x|\,|\theta_0(x)|\,dx<\infty$.
For every $N\geq2$ consider
$\gamma_1^N,\gamma_2^N,\dots,\gamma_N^N\in\R$
and $x_1^N,x_2^N,\dots,x_N^N$ such that
\begin{equation}\label{e:wasscond}
  \sum_{j=1}^N (\gamma_j^N)_+
    = \int_{\R^2} \theta_0(x)_+\,dx,
      \qquad
  \sum_{j=1}^N (\gamma_j^N)_-
    = \int_{\R^2} \theta_0(x)_-\,dx,
\end{equation}
where $x_+=x\vee0$ and $x_-=(-x)\vee0$,
and set
\[
  \theta_0^N
    = \sum_{j=1}^N \gamma_j^N\delta_{x_j^N}.
\]
For every $\epsilon>0$ consider a smooth
approximation $k_m^\epsilon$ of the kernel
$k_m$\footnote{For instance, one can consider
the smooth approximation
$k_m^\epsilon=\rho_\epsilon\star k_m$
considered in Proposition~\ref{p:ainviscid},
as well as $k_m^\epsilon=\eta_\epsilon k_m$,
where $\eta_\epsilon$ is a radial
function (so that $k_m^\epsilon$
is still divergence-free) which
is $1$ in $B_{2\epsilon}(0)^c$ and
$0$ in $B_\epsilon(0)$.}, and
consider the solution
$(X_{\epsilon,j}^N)_{j=1,2,\dots,N}$
of the evolution
\begin{equation}\label{e:amotion}
  \dot X_{\epsilon,j}^N
    = \sum_{k=1}^N\gamma_k^N k_m^\epsilon(X_{\epsilon,j}^N - X_{\epsilon,k}^N),
\end{equation}
with initial conditions
$x_1^N,x_2^N,\dots,x_N^N$.
Set finally
\[
  \theta_\epsilon^N(t)
    = \sum_{j=1}^N\gamma_j^N\delta_{X_{\epsilon,j}^N(t)},
      \qquad t\geq0.
\]
\begin{theorem}\label{t:approximate}
  Let $m\in(1,2)$ and let $\theta_0$, $\theta_0^N$
  as above, and assume that
  \[
    W_1(\theta_0^N,\theta_0)
      \longrightarrow 0,
        \qquad\text{as }N\uparrow\infty.
  \]
  Let $\theta$ be a solution
  of \eqref{e:gSQG} given by
  Theorem~\ref{t:weak}. Then
  for every $T>0$
  there are two sequences
  $(\epsilon_n)_{n\geq1}$
  and $(N_n)_{n\geq1}$
  such that
  \[
    \sup_{t\in [0,T]} W_1(\theta_{\epsilon_n}^{N_n}(t),\theta(t))
      \longrightarrow 0,
        \qquad\text{as }n\to\infty.
  \]
\end{theorem}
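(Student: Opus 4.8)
The plan is to factor the problem through the regularized equation \eqref{e:ainviscid} and to exploit its Wasserstein stability (Corollary~\ref{c:wasserstein}), so that everything reduces to a diagonal extraction. The starting observation is that, for each fixed $\epsilon$, the measure $\theta_\epsilon^N(t)=\sum_j\gamma_j^N\delta_{X_{\epsilon,j}^N(t)}$ is precisely the unique distributional solution of \eqref{e:ainviscid} with initial datum $\theta_0^N$. Indeed, testing against $\varphi\in C_c^\infty(\R^2)$ and differentiating in time gives
\[
  \frac{d}{dt}\sum_j\gamma_j^N\varphi(X_{\epsilon,j}^N(t))
    = \sum_j\gamma_j^N\nabla\varphi(X_{\epsilon,j}^N(t))\cdot\dot X_{\epsilon,j}^N(t),
\]
and since $k_m^\epsilon$ is odd the self-interaction term $\gamma_j^N k_m^\epsilon(0)$ vanishes, so $\dot X_{\epsilon,j}^N=(k_m^\epsilon\star\theta_\epsilon^N)(X_{\epsilon,j}^N)$; this is exactly the weak formulation of $\partial_t\theta+\Div(u\theta)=0$. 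The flow of the smooth, globally Lipschitz field $k_m^\epsilon\star\theta_\epsilon^N$ is injective, so the vortices stay distinct and the positive and negative masses of $\theta_\epsilon^N(t)$ equal $\sum_j(\gamma_j^N)_\pm$, which by \eqref{e:wasscond} coincide with the positive and negative masses of $\theta_0$.

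Next I introduce the comparison object $\theta_\epsilon$, the solution of \eqref{e:ainviscid} with the unregularized datum $\theta_0$ furnished by Corollary~\ref{c:wasserstein}. Since $\theta_0\in L^1(\R^2)\cap L^\infty(\R^2)$ and the sign is transported by the volume-preserving characteristic flow, $\theta_\epsilon(t)$ has the same positive and negative masses as $\theta_0$, hence as $\theta_\epsilon^N(t)$. Corollary~\ref{c:wasserstein} then yields, for each fixed $\epsilon$,
\[
  \sup_{t\in[0,T]}W_1(\theta_\epsilon^N(t),\theta_\epsilon(t))
    \leq C(c_\epsilon,T)\,W_1(\theta_0^N,\theta_0)
    \xrightarrow[N\to\infty]{}0,
\]
because for fixed $\epsilon$ the constant $C(c_\epsilon,T)$ is finite and $W_1(\theta_0^N,\theta_0)\to0$ by hypothesis. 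This is the first of the two limits I need.

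The second limit is $\sup_{[0,T]}W_1(\theta_\epsilon(t),\theta(t))\to0$ as $\epsilon\to0$. Running the compactness argument of Theorem~\ref{t:weak} with the unregularized datum $\theta_0$, a subsequence $\theta_{\epsilon_n}$ converges to a weak solution, which I take to be $\theta$, strongly in $C([0,T];L^p_{\mathrm{loc}}(\R^2))$ for every $p\in[1,\infty)$; in particular the positive and negative parts converge in $C([0,T];L^1_{\mathrm{loc}})$. Corollary~\ref{c:weak} bounds $\sup_{[0,T]}\int_{\R^2}|x|\,|\theta_{\epsilon_n}(t)|\,dx$ by a constant depending only on $\theta_0$ and $T$, providing uniform tightness. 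Using duality and splitting a $\bd{\cdot}$-Lipschitz test function into its values on $B_R(0)$ and on the complement (exactly as in the centre-of-pseudo-vorticity estimate of Corollary~\ref{c:weak}, where $\bd{\cdot}\leq1$ removes the diameter factor) one obtains, for any $R$,
\[
  W_1\big((\theta_{\epsilon_n}(t))_\pm,\theta(t)_\pm\big)
    \leq \|\theta_{\epsilon_n}(t)-\theta(t)\|_{L^1(B_R(0))}
      + \tfrac1R\int_{\R^2}|x|\,(|\theta_{\epsilon_n}(t)|+|\theta(t)|)\,dx;
\]
letting $n\to\infty$ and then $R\uparrow\infty$ gives $\sup_{[0,T]}W_1(\theta_{\epsilon_n}(t),\theta(t))\to0$.

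Finally I assemble the two limits by a diagonal argument: for each $k$ pick $\epsilon_{n_k}$ so that the second limit is below $1/(2k)$, and then $N_k$ large so that the first limit (with $\epsilon=\epsilon_{n_k}$) is below $1/(2k)$; the triangle inequality yields $\sup_{[0,T]}W_1(\theta_{\epsilon_{n_k}}^{N_k}(t),\theta(t))<1/k$. I expect the main obstacle to be precisely the impossibility of a joint estimate: the stability constant $C(c_\epsilon,T)$ blows up as $\epsilon\to0$, since $\|k_m^\epsilon\|_{L^\infty},\|\nabla k_m^\epsilon\|_{L^\infty}\to\infty$, so one cannot send $N\to\infty$ and $\epsilon\to0$ simultaneously with a single bound. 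The order of limits (first $N$, then $\epsilon$) is forced, which is exactly why the statement asserts only the existence of the sequences $(\epsilon_n),(N_n)$ rather than a quantitative rate; the secondary technical point is upgrading the strong $L^1_{\mathrm{loc}}$ convergence of Theorem~\ref{t:weak} to uniform-in-time Wasserstein convergence, for which the conserved first moment of Corollary~\ref{c:weak} supplies the needed tightness.
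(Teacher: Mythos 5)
Your proposal is correct and follows essentially the same route as the paper: identify $\theta_\epsilon^N$ as the solution of the regularized problem \eqref{e:ainviscid} with datum $\theta_0^N$, apply the Wasserstein stability of Corollary~\ref{c:wasserstein} for fixed $\epsilon$, upgrade the strong $C([0,T];L^p_{\mathrm{loc}})$ convergence $\theta_{\epsilon_n}\to\theta$ to uniform-in-time $W_1$ convergence via the first-moment tightness from Corollary~\ref{c:weak}, and conclude by a diagonal choice of $N_n$ against the blowing-up constant $C(c_{\epsilon_n},T)$. Your additional care about matching positive and negative masses (via \eqref{e:wasscond}) and your remark on the forced order of limits are consistent with what the paper only sketches.
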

\begin{remark}
  Condition \eqref{e:wasscond} is only technical
  in view of the evaluation in terms of the
  Wasserstein distance, since the Wasserstein
  distance can be infinite in case of measures
  with different masses.
  In case \eqref{e:wasscond} holds only
  asymptotically, the solution is to
  compare $\theta_\epsilon^N$ with
  a modification of $\theta_0$
  such that the equality of masses
  is re-established and the modification
  weakly converges to $\theta_0$.
\end{remark}
\begin{remark}
  There are two oddities about the theorem above.
  The first is about the limit along a sequence
  of regularizations $(\epsilon)_{n\geq1}$.
  In the analogous result on Euler equations
  ($m=2$) this is not required, and this
  is due to the fact that uniqueness
  for initial conditions in $L^1(\R^2)\cap L^\infty(\R^2)$
  is not known when $m<2$ (but see also
  Remark~\ref{r:fail} in view of the uniqueness
  proof of Corollary~\ref{c:wasserstein}).

  The second issue regards the appearance
  of the regularized dynamics in place
  of the original dynamics \eqref{e:motion}.
  Consider for simplicity the regularization
  $k_m^\epsilon=\eta_\epsilon k_m$,
  where $\eta_\epsilon$ is smooth, bounded,
  radial, equal to $1$ everywhere but
  in $B_{\epsilon}(0)$, and $0$ in $B_{\epsilon/2}(0)$.
  With this choice it is immediate to see
  that the solutions to \eqref{e:motion}
  and \eqref{e:amotion} are the same
  as long as
  \[
    D_N
      \eqdef\min_{t\in[0,T]}\min_{i\neq j}|X_j^N(t) - X_i^N(t)|
  \]
  is larger that $\epsilon$, where $X^N$
  is the solution to \eqref{e:motion}.
  The problem is now apparent: in order
  to consider the true dynamics we
  need to have $D_{N_n}\ll\epsilon_n$,
  which in principle means $N$ not too large.
  On the other hand, condition
  \eqref{e:appcond} in the proof
  requires to have $N$ large, to compensate
  for the diverging constant.
\end{remark}
\begin{proof}[Proof of Theorem~\ref{t:approximate}]
  It is not difficult to see through Corollary~\ref{c:wasserstein}
  and the proof of Theorem~\ref{t:weak} that there
  is a sequence $(\epsilon_n)_{n\geq1}$
  such that $\theta_{\epsilon_n}\to\theta$
  strongly in $C([0,T];L^p_{\text{loc}}(\R^2))$ for all
  $p\in[1,\infty)$, where $\theta_{\epsilon}$
  is the solution to \eqref{e:ainviscid}
  with initial condition $\theta_0$.

  Let us prove that
  \begin{equation}\label{e:issue}
    \sup_{t\in [0,T]}W_1(\theta_{\epsilon_n}(t),\theta(t))
      \longrightarrow 0.
  \end{equation}
  Indeed, it suffices to prove the same
  statement with the Wasserstein metric
  replaced by the $L^1$ metric. As
  in Corollary~\ref{c:weak}, we
  can prove that
  \[
    c_0
      \eqdef\sup_{n\geq 1}\sup_{t\in[0,T]}\int_{\R^2}
        |x|\,|\theta_{\epsilon_n}(t)|\,dx
      <\infty,
  \]
  and this holds in the limit for $\theta$.
  Therefore, if $R>0$, for all $t\in[0,T]$,
  \[
    \int_{B_R(0)^c}|\theta_{\epsilon_n}(t,x)-\theta(t,x)|\,dx
      \leq\frac{2c_0}{R},
  \]
  and
  \[
    \sup_{t\in[0,T]}\|\theta_{\epsilon_n}(t) - \theta(t)\|_{L^1}
      \leq\sup_{t\in[0,T]}\|\uno_{B_R(0)}(\theta_{\epsilon_n}(t) - \theta(t))\|_{L^1}
        + \frac{2c_0}{R}.
  \]
  Claim \eqref{e:issue} now follows
  by taking first the limit in $n\to\infty$
  and then in $R\uparrow\infty$.

  By Corollary~\ref{c:wasserstein},
  \[
    \sup_{t\in [0,T]}W_1(\theta_{\epsilon_n}(t),\theta_{\epsilon_n}^N(t))
      \leq C(c_{\epsilon_n},T)W_1(\theta_0^N,\theta_0).
  \]
  Indeed, it is not difficult to check that
  $\theta_\epsilon^N$ is a solution to
  \eqref{e:ainviscid}.

  Finally, choose $(N_n)_{n\geq1}$ so that
  \begin{equation}\label{e:appcond}
    C(c_{\epsilon_n},T)W_1(\theta_0^{N_n},\theta_0)
      \longrightarrow 0,
  \end{equation}
  as $n\to\infty$. Then
  \[
    W_1(\theta_{\epsilon_n}^{N_n}(t),\theta(t))
      \leq W_1(\theta_{\epsilon_n}^{N_n}(t),\theta_{\epsilon_n}(t))
        + W_1(\theta_{\epsilon_n}(t),\theta(t)).
  \]
  and this proves the theorem.
\end{proof}
\begin{remark}\label{r:fail}
  The assumption $\int_{\R^2}|x|\,|\theta_0(x)|\,dx$
  seems a bit too strong. The same results
  holds without that assumption in the
  case $m=2$, see \cite{MarPul1982}.
  A basic reason is that in this
  more singular case one does not
  expect to have well-defined characteristics.
  Indeed, by Lemma~\ref{l:mapping} we
  can expect a H\"older continuous
  velocity field. When $m=2$ velocity
  is Lipschitz, up to a logarithmic
  correction, and this can be read as
  a contraction in Wasserstein
  distance. The same ideas do not work
  in this framework. Let us
  give a few details, and assume for
  simplicity of exposition that
  $\theta_0$ is non-negative and
  of total mass one. Recall that the
  Wasserstein distance is an infimum
  over the transportation cost of the
  mass from one distribution to the other.
  Therefore it is sufficient to prove
  contraction with respect to a coupling.
  We first construct a suitable coupling
  of $\theta_\epsilon(t)$ and
  $\theta_\delta(t)$, for some $\epsilon\geq\delta>0$,
  using the characteristics $X^x_\epsilon$
  of \eqref{e:ainviscid}, as in the proof
  of Corollary~\ref{c:wasserstein}, namely
  \[
    f\mapsto\int_{\R^2}f(X^x_\epsilon(t),X^x_\delta(t))\theta_0(x)\,dx
  \]
  Set
  \[
    \Psi(t)
      = \int_{\R^2}\bd{X^x_\epsilon(t) - X^x_\delta(t)}\theta_0(x)\,dx,
  \]
  then using Lemma~\ref{l:mapping},
  eventually one gets,
  \[
    \dot\Psi
      \leq \epsilon^{m-1} + \Psi^{m-1}.
  \]
  Since $m<2$, the above differential
  inequality does not ensure that
  $\Psi\to0$ as $\epsilon,\delta\to0$,
  as it happens when $m\geq2$ (with a logarithmic
  correction that does not change the result
  when $m=2$).
\end{remark}
\subsection{A derivation of the vortex model}

In this section we wish to prove conversely
the connection
between the vortex evolution \eqref{e:motion}
and the equation \eqref{e:gSQG}. Similar
results for $m=2$ can be found in
\cite{MarPul1993}, that we partially follow.

Fix $N\geq1$, $\gamma_1,\gamma_2,\dots,\gamma_N\in\R$,
and $N$ points $x^0_1,x^0_2,\dots, x^0_N\in \R^2$.
For every $\epsilon>0$, consider a family
of functions
$\theta_{0,1}^\epsilon,\theta_{0,2}^\epsilon,
\dots,\theta_{0,N}^\epsilon$ such that
for all $j=1,2,\dots,N$,
\begin{itemize}
  \item $\supp\theta_{0,j}^\epsilon\subset B_\epsilon(x^0_j)$,
  \item $\theta_{0,j}^\epsilon\geq0$ {a.\,e.},
  \item \footnote{More singularity may be allowed,
      namely a bound $\epsilon^{-2\eta}$ with
      $1\leq\eta<\frac{m(m-1)}{3-m}$.}
    $|\theta_{0,j}^\epsilon|\lesssim \epsilon^{-2}$,
  \item $\int_{\R^2}\theta_{0,j}^\epsilon(x)\,dx=1$,
  \item $\sup_{\epsilon>0}\int_{\R^2}|x|^2\theta_{0,j}^\epsilon(x)\,dx<\infty$.
\end{itemize}
A simple example of this setting is given
by \emph{vortex blobs}, namely we set
$\theta_{0,j}^\epsilon=\epsilon^{-2}\eta_j((x-x_j^0)/\epsilon)$,
where each $\eta_j$ is non-negative, bounded, with support
in $B_1(0)$, and with integral equal to $1$ on $\R^2$.

Define
\begin{equation}\label{e:blob}
 \theta_\epsilon(0,x)
   = \sum_{j=1}^N \gamma_j \theta_{0,j}^\epsilon(x),
\end{equation}
where $\gamma_1, \ldots, \gamma_N$ are the intensities
of each vortex blob, $x_1^0, \ldots, x_N^0$ are the centers,
and $\epsilon$ is small enough that the balls
$(B_\epsilon(x_j^0))_{j=1,\ldots,N}$ are disjoint.

In the theorem below, we assume
that the vortex evolution \eqref{e:motion}
with initial
condition $(x_1^0,x_2^0,\dots,x_N^0)$
has a global solution. According to
Theorem~\ref{t:existence}, this
happens for {a.\,e.} choice
of $(x_1^0,x_2^0,\dots,x_N^0)$ if the
intensities are as in \eqref{e:maingamma}.
\begin{theorem}\label{t:main}
  Assume $\sqrt{3} < m <2$ and denote by $\theta_\epsilon$
  a solution to \eqref{e:gSQG}, according to
  Theorem~\ref{t:weak}, with initial
  condition $\theta_\epsilon(0)$ given by
  \eqref{e:blob}. Then for all $T>0$,
  \[
    \lim_{\epsilon \to 0}\scalar{\theta_\epsilon(t),\phi}
      = \sum_{j=1}^N \phi(X_i(t)), \qquad t \in [0,T],
  \]
  where $(X_i)_{i=1,\ldots, N}$ is the solution
  of the vortex evolution \eqref{e:motion} with
  vortex intensities
  $\gamma_1,\gamma_2\ldots, \gamma_N$ and
  with initial conditions
  $(x_1^0,x_2^0,\dots, x_N^0)$.
\end{theorem}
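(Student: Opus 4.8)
The plan is to track the evolution of each vortex blob separately and show that each one remains concentrated near the corresponding point-vortex trajectory $X_j(t)$, while the inter-blob interaction converges to the velocity field dictated by \eqref{e:motion}. Concretely, I would write $\theta_\epsilon(t) = \sum_{j=1}^N \gamma_j\theta_{\epsilon,j}(t)$ where $\theta_{\epsilon,j}$ is the portion of the pseudo-vorticity initially equal to $\gamma_j\theta_{0,j}^\epsilon$. Because the dynamics conserves sign and the blobs start with disjoint supports, each $\theta_{\epsilon,j}(t)$ remains a non-negative measure of unit total mass, so I can track its centre of pseudo-vorticity $B_j(t) \eqdef C_{\theta_{\epsilon,j}(t)}$ and its moment of inertia $J_{\theta_{\epsilon,j}(t)}$ using Corollary~\ref{c:weakF}, treating the velocity induced by all the other blobs, $F_j = k_m\star\sum_{k\neq j}\gamma_k\theta_{\epsilon,k}$, as the external field $F$ in that corollary.

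The two quantities I must control are the centre $B_j(t)$ and the spread. First I would invoke localisation, Proposition~\ref{p:singleblob} (the statement referenced just above the theorem), to conclude that each blob stays concentrated: its support, or at least the bulk of its mass measured by $J_{\theta_{\epsilon,j}(t)}$, remains of size comparable to $\epsilon$ (or at worst a controlled power of $\epsilon$) up to time $T$. This is exactly the place where the restriction $\sqrt{3}<m<2$ enters, since the localisation estimate fails for smaller $m$. Granting this, the self-interaction term drops out of the equation for $\dot B_j$ because of the anti-symmetry $x\cdot k_m(x)=0$ already exploited in Corollary~\ref{c:weak}; only the field $F_j$ generated by the other blobs contributes, giving
\[
  \dot B_j(t)
    = \int_{\R^2}\theta_{\epsilon,j}(t,x)\sum_{k\neq j}\gamma_k
        (k_m\star\theta_{\epsilon,k}(t))(x)\,dx.
\]
The right-hand side is close to $\sum_{k\neq j}\gamma_k k_m(B_j(t)-B_k(t))$ up to errors controlled by the blob sizes and by the modulus of continuity of $k_m$ away from the diagonal, using Lemma~\ref{l:mapping} and the fact that distinct point vortices stay at positive mutual distance (Theorem~\ref{t:existence}). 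This identifies $B_j(t)$ as an approximate solution of the vortex ODE system \eqref{e:motion}.

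The main obstacle is making the error estimate on $\dot B_j$ quantitative and uniform on $[0,T]$: I must bound the difference between $k_m\star\theta_{\epsilon,k}$ evaluated against the spread-out blob $\theta_{\epsilon,j}$ and the singular kernel $k_m(B_j-B_k)$ evaluated at the two centres. Away from the diagonal this is harmless, but I must ensure the blobs never collide, which requires a bootstrap: localisation keeps each blob small, smallness keeps the centres near the non-colliding point vortices, and the minimum inter-vortex distance from Theorem~\ref{t:existence} keeps the blobs separated, closing the loop. Once $|B_j(t)-X_j(t)|\to0$ uniformly on $[0,T]$ follows by Gronwall from comparing the approximate and exact ODE systems, the conclusion is immediate: for any test function $\phi$, $\scalar{\theta_\epsilon(t),\phi} = \sum_j \gamma_j\int\theta_{\epsilon,j}(t,x)\phi(x)\,dx$ converges to $\sum_j \gamma_j\phi(B_j(t))\to\sum_j\gamma_j\phi(X_j(t))$, since each blob concentrates at its centre and each centre converges to the corresponding point vortex.
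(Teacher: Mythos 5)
Your plan follows the paper's proof almost exactly: decompose the pseudo-vorticity into the $N$ blobs, treat the velocity induced by the other blobs as the external field of Corollary~\ref{c:weakF}, invoke the localisation Proposition~\ref{p:singleblob}, and close a bootstrap using the minimal inter-vortex distance guaranteed by the non-collision of the limiting dynamics. The one step you assert without justification, and which the paper spends the first half of its proof on, is the decomposition itself: you claim that each $\theta_{\epsilon,j}(t)$ remains a non-negative measure of unit mass because the blobs start with disjoint supports. For the actual weak solution of \eqref{e:gSQG} the velocity $u=k_m\star\theta_\epsilon$ is only H\"older continuous (Lemma~\ref{l:mapping}), so the Lagrangian flow is not known to be well defined or unique, and one cannot simply transport the initial partition along characteristics. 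The paper resolves this by inserting an intermediate regularisation of the kernel ($k_m^\delta$ with $\delta\ll D$): for the regularised velocity the characteristics are measure-preserving diffeomorphisms, the supports stay disjoint, the coupled blob system can be written down exactly, Proposition~\ref{p:singleblob} applies uniformly in $\delta$, and only then does one pass to the limit $\delta\to0$. Apart from this, your explicit Gronwall comparison of the centres $B_j$ with the solution of \eqref{e:motion} is a mild reorganisation of what Proposition~\ref{p:singleblob} already delivers, since its conclusion identifies the limit of each blob with the solution of $\dot c=F(t,c)$; the substance of the argument is the same.
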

The proof of this result follows broadly the proof
of \cite[Theorem 2.1]{MarPul1993}.
It is based on a series of results that we prove
in Section~\ref{s:blob}.
\begin{remark}\label{r:boundary}
  In principle an analogous result
  can be proved in the case of the evolution
  in bounded domains, with additional difficulties
  due to the boundary: there is non conservation
  of centre and moment of inertia,
  one should clarify in general the definition
  of fractional Laplacian in terms of the boundary
  conditions, the Green function one obtains is
  more singular also on the boundary, etc.
  In particular the presence of the boundary
  creates an effect of
  self-interaction on point vortices.
  
  This does not happen on the torus,
  and there is no effect of self-interaction.
  We wish to discuss briefly and heuristically
  how the self-interaction term disappears
  in system~\eqref{e:motion} on the torus.
  First of all we notice that a structure
  theorem for the Green function $G_m^{per}$ on
  the torus holds in terms of the Green
  function \eqref{e:green} on the whole space,
  namely $G_m^{per}=G_m+g_m^{per}$.
  By translation invariance, we have
  that $g_m^{per}(x,y)=g_m^{per}(x-y)$ and
  $g_m^{per}$ is bounded.
  
  Following \cite{MarPul1994}, a heuristic motivation
  for the self-interaction term can be seen as follows.
  Consider a single vortex blob, as in \eqref{e:blob},
  of intensity $\gamma$ centred at $x_0\in\Torus$,
  for instance
  $\theta_\epsilon(x)=\gamma\epsilon^{-2}\eta(x/\epsilon)$.
  Assume moreover that $\eta$ is \emph{radial}.
  By the decomposition discussed above,
  \[
    u_\epsilon(x_0)
      = \int_\Torus \nabla^\perp_x G_m(x_0,y)\theta_\epsilon(y)\,dy
        + \int_\Torus \nabla^\perp g_m^{per}(x_0,y)\theta_\epsilon(y)\,dy
  \]
  The first integral is zero by symmetry, and
  since $\theta_\epsilon\rightharpoonup\gamma\delta_0$,
  the second integral converges,
  \[
    \int_D \nabla^\perp g_m^{per}(x_0,y)\theta_\epsilon(y)\,dy
      \longrightarrow \gamma\nabla^\perp g_m^{per}(x_0,x_0).
  \]
  In conclusion $u_\epsilon(x_0)\to\gamma\nabla^\perp g_m^{per}(x_0,x_0)$,
  and $\gamma\nabla^\perp g_m^{per}(x_0,x_0)$ can be considered the
  velocity field generated by the vortex itself.
  By translation invariance this term is $0$ and
  confirms the validity of the evolution~\eqref{e:motion}.
  As a final remark, notice that this heuristic argument
  strongly depends on the symmetry of the vortex blob.
  If the blob shape is not symmetric, then the integrals
  above may diverge.
\end{remark}
\subsubsection{Convergence of vortex blobs to point-vortices}\label{s:blob}

The following proposition is the version of
\cite[Theorem 3.1]{MarPul1993} in our setting,
and proves Theorem~\ref{t:main} above
for a single point vortex subject to
an additional external velocity field.
A major outcome of the proposition below
is the property of localisation, namely
the evolution of \eqref{e:gSQG} started
on a vortex blob stays around the
centre of pseudo-vorticity.

First, we single out the initial condition
from the setting of Theorem \ref{t:main}.
Fix $x_0\in\R^2$ and $T>0$.
Consider a family $(\theta_\epsilon(0))_{\epsilon>0}$
such that
\begin{itemize}
  \item $\theta_\epsilon(0):\R^2\to\R$ non-negative functions,
  \item $\supp(\theta_\epsilon(0))\subset B_\epsilon(x_0)$,
  \item $|\theta_\epsilon(0,x)|\lesssim\epsilon^{-2}$, for all $x$,
  \item $\int_{\R^2}\theta_\epsilon(0,x)\,dx=1$ and
    $\int_{\R^2}|x|^2\theta_\epsilon(0,x)\,dx<\infty$.
\end{itemize}
Moreover, consider a family
$(F_\epsilon)_{\epsilon>0}$
such that
\begin{itemize}
  \item $F_\epsilon:[0,T]\times\R^2\to\R^2$ are continuous divergence-free vector fields,
  \item $F_\epsilon$ uniformly bounded (in $t,x,\epsilon$),
  \item $F_\epsilon$ uniformly Lipschitz in the space variable,
    with common Lipschitz constant $M>0$,
  \item there is $F:[0,T]\times\R^2\to\R^2$ such that
    \[
      \sup_{t\in [0,T],x\in\R^2}\|F_\epsilon-F\|_{L^\infty([0,T]\times\R^2)}
        \lesssim\epsilon,
    \]
  \item $\supp(F_\epsilon(t))$ is contained in a ball
    centred at $c(t)$ and with radius of order $O(\epsilon)$,
    for every $t\in[0,T]$.
\end{itemize}
Here $c$ is the solution to
\begin{equation}\label{e:cdynamics}
  \begin{cases}
    \dot{c}
      = F(t,c),\\
    c(0)
      = x_0.
  \end{cases}
\end{equation}
\begin{proposition}\label{p:singleblob}
  Let $\sqrt{3}<m<2$ and consider $x_0$, $T$,
  $(\theta_\epsilon(0))_{\epsilon>0}$ and
  $(F_\epsilon)_{\epsilon>0}$, $F$ as above.
  Denote by $\theta_\epsilon$ a solution, according
  to Corollary~\ref{c:weakF} of
  \[
    \begin{cases}
      \partial_t \theta + \Div((u+F_\epsilon)\theta)
        = 0,\\
      u
        = k_m\star\theta,
    \end{cases}
  \]
  with initial condition $\theta_\epsilon(0)$. Finally
  denote by $c_\epsilon$ the centre of pseudo-vorticity
  of $\theta_\epsilon(t)$ (see \eqref{e:fcentre}).
  Then
  \begin{enumerate}
    \item $c_\epsilon\to c$ uniformly in $t\in [0,T]$,
    \item For every $\phi\in C^1_b(\R^2)$,
      \[
        \scalar{\theta_\epsilon(t),\phi}
          \overset{\epsilon\to 0}{\longrightarrow}\phi(c(t)),
            \qquad\text{uniformly in } t\in [0,T].
      \]
    \item For every $R>0$ there is $\epsilon_0 = \epsilon_0(R,T)>0$
      such that, if $\epsilon \leq \epsilon_0$, then
      $\supp\theta_\epsilon(t)\subset B_R(c_\epsilon(t))$
      for $t \in [0,T]$.
  \end{enumerate}
\end{proposition}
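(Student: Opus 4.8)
The plan is to track the moment of inertia of the single blob around its own centre of pseudo-vorticity, using the crucial observation (emphasised in the introduction) that the singular self-interaction velocity $u=k_m\star\theta_\epsilon$ does not contribute to the spreading of the blob. I would start from the formula for $J_{\theta_\epsilon(t)}$ provided by Corollary~\ref{c:weakF}, which for the equation with external field $F_\epsilon$ reads
\[
  J_{\theta_\epsilon(t)}
    \leq J_{\theta_\epsilon(0)}
      + 2\int_0^t\int_{\R^2}\theta_\epsilon(s,x)(x-c_\epsilon(s))\cdot F_\epsilon(s,x)\,dx\,ds.
\]
Since $\theta_\epsilon(0)$ is supported in $B_\epsilon(x_0)$ with the stated mass and decay bounds, $J_{\theta_\epsilon(0)}\lesssim\epsilon^2$. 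The forcing term must be controlled using the hypotheses that $F_\epsilon$ is uniformly bounded, uniformly Lipschitz with constant $M$, and supported in a ball of radius $O(\epsilon)$ about $c(s)$; the idea is that on the support of $\theta_\epsilon$ the vector $(x-c_\epsilon(s))$ is small (of the size of the blob) and $F_\epsilon$ is comparably small there, so the integral feeds back into $J_{\theta_\epsilon}$ in a Gronwall-type fashion. The key conservation that makes the self-interaction invisible is exactly that $k_m$ is antisymmetric and $x\cdot k_m(x)=0$, already used in Corollary~\ref{c:weak}.

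First I would prove claim (1), the convergence $c_\epsilon\to c$ uniformly. By the first displayed formula in Corollary~\ref{c:weakF}, $\dot c_\epsilon(t)=\int_{\R^2}\theta_\epsilon(t,x)F_\epsilon(t,x)\,dx$, and since $\int\theta_\epsilon=1$ one can write
\[
  \dot c_\epsilon(t) - F(t,c_\epsilon(t))
    = \int_{\R^2}\theta_\epsilon(t,x)\bigl(F_\epsilon(t,x)-F(t,c_\epsilon(t))\bigr)\,dx.
\]
Splitting $F_\epsilon(t,x)-F(t,c_\epsilon(t))$ into $(F_\epsilon-F)(t,x)$, controlled by $\epsilon$ uniformly, plus $F(t,x)-F(t,c_\epsilon(t))$, controlled by $M$ times the spread of the blob about $c_\epsilon(t)$, one gets a bound in terms of $\epsilon$ and of $\int\theta_\epsilon|x-c_\epsilon|\,dx\leq J_{\theta_\epsilon}^{1/2}$ by Cauchy--Schwarz. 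Comparing with \eqref{e:cdynamics} via Gronwall (using the Lipschitz bound on $F$) then yields $\sup_{[0,T]}|c_\epsilon-c|\to0$, provided one already knows $J_{\theta_\epsilon}\to0$, so the inertia estimate is logically prior.

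The main obstacle, and the reason for the restriction $\sqrt{3}<m<2$, is closing the estimate on $J_{\theta_\epsilon}$ to obtain a quantitative decay rate $J_{\theta_\epsilon(t)}\lesssim\epsilon^\beta$ for some $\beta>0$. The forcing integral, after the splitting above and Cauchy--Schwarz, is bounded by something like $(\epsilon+M\,\text{spread})\,J_{\theta_\epsilon}^{1/2}$; but a naive Gronwall only controls the spread through $J_{\theta_\epsilon}$ itself, and the support of $\theta_\epsilon$ need not stay confined to an $O(\epsilon)$ ball a priori, so the hypothesis that $\supp F_\epsilon$ has radius $O(\epsilon)$ interacts with the unknown growth of $\supp\theta_\epsilon$. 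The delicate point is that controlling how much mass of $\theta_\epsilon$ escapes the $O(\epsilon)$-support of $F_\epsilon$ requires re-entering the estimate and using the mapping bound of Lemma~\ref{l:mapping}, $|k_m\star\theta_\epsilon(x)-k_m\star\theta_\epsilon(y)|\leq c|x-y|^{m-1}$, to bound the velocity increments; the exponent $m-1$ is what forces the threshold $m^2>3$, exactly as the introduction warns that localisation fails for $m\leq\sqrt3$. I would therefore run a bootstrap: assume a provisional confinement $\supp\theta_\epsilon(t)\subset B_{r(\epsilon)}(c_\epsilon(t))$ with $r(\epsilon)\to0$, use it to estimate the forcing and hence improve the bound on $J_{\theta_\epsilon}$, and then recover confinement by a Chebyshev argument together with the characteristic flow of $u+F_\epsilon$, the velocity increments being Hölder-controlled by Lemma~\ref{l:mapping}.

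Once $J_{\theta_\epsilon(t)}\to0$ uniformly on $[0,T]$ is established, claims (2) and (3) follow quickly. For (2), writing $\scalar{\theta_\epsilon(t),\phi}-\phi(c(t))=\int\theta_\epsilon(\phi(x)-\phi(c_\epsilon))\,dx+(\phi(c_\epsilon)-\phi(c))$, the first piece is bounded by $\|\nabla\phi\|_\infty J_{\theta_\epsilon}^{1/2}$ and the second by part (1) and continuity of $\phi$. For (3), confinement $\supp\theta_\epsilon(t)\subset B_R(c_\epsilon(t))$ is precisely the output of the bootstrap, which gives $r(\epsilon)\leq R$ for $\epsilon\leq\epsilon_0(R,T)$.
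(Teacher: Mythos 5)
There is a genuine gap, and it sits exactly where you locate the ``main obstacle.'' First, a misdiagnosis: the moment-of-inertia estimate closes with no bootstrap and with no role for $m$. Since $\int\theta_\epsilon(t,x)(x-c_\epsilon(t))\,dx=0$ by definition of the centre, one may subtract $F_\epsilon(t,c_\epsilon(t))$ inside the forcing integral for free, and the \emph{global} uniform Lipschitz bound on $F_\epsilon$ gives
\[
  \frac{d}{dt}J_\epsilon(t)
    = 2\int_{\R^2}\theta_\epsilon(t,x)\,(x-c_\epsilon(t))\cdot\bigl(F_\epsilon(t,x)-F_\epsilon(t,c_\epsilon(t))\bigr)\,dx
    \leq 2M J_\epsilon(t),
\]
hence $\sup_{[0,T]}J_\epsilon\lesssim\epsilon^2$ directly by Gronwall; the $O(\epsilon)$ support of $F_\epsilon$ and the self-interaction play no role here (the latter drops out because $k_m$ is antisymmetric and $x\cdot k_m(x)=0$, as you note). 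With that, your arguments for claims (1) and (2) are fine and match the paper's Steps 1--4.

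The real difficulty is claim (3), and your proposed route does not close. Chebyshev applied to $J_\epsilon\lesssim\epsilon^2$ controls only the \emph{mass} outside a $\delta$-ball, $m_\delta(t)\lesssim\epsilon^2/\delta^2$, not the support; and this bound is too weak to make the velocity generated by the escaped pseudo-vorticity small. Indeed, the far-field contribution to the radial velocity at distance $\sim\delta$ from $c_\epsilon$ is estimated (via Hardy--Littlewood--Sobolev, using $\|\theta_\epsilon\|_\infty\lesssim\epsilon^{-2}$) by $(\epsilon^{-2})^{\frac12(3-m)}m_\delta^{\frac12(m-1)}$, and inserting $m_\delta\lesssim\epsilon^{2}/\delta^{2}$ with $\delta=\epsilon^a$ gives the exponent $2m-4-a(m-1)<0$ for every $m<2$: the bound diverges. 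The missing idea is the paper's iteration scheme: setting $\mu_\delta(t)$ to be the mass outside $B_\delta(c_\epsilon(t))$ measured through a radial cutoff, one derives (using the antisymmetry of $k_m$, the identity $\nabla\phi_\delta\cdot k_m=0$ for radial cutoffs, and HLS) the recursive inequality $\mu_\delta(t)\leq\mu_\delta(0)+cT\epsilon^{2m}\delta^{-14}+c\int_0^t\mu_{\delta/2}(s)\,ds$, which, iterated $k\sim\log\epsilon^{-1}$ times over the dyadic scales $2^{-j}\delta$, upgrades the Chebyshev bound to $\mu_\delta\lesssim\epsilon^{2m-14a}$. Only with this improved decay does the far-field velocity become $O(\epsilon^{m^2-3-7a(m-1)})$, and \emph{that} is the one place where $m>\sqrt3$ enters (to make $m^2-3>0$) --- not, as you suggest, the H\"older exponent $m-1$ of Lemma~\ref{l:mapping}, which is positive for all $m\in(1,2)$. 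Without the iteration, the confinement argument for the support cannot be completed.
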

\begin{proof}
  The proof follows the proof of \cite[Theorem 3.1]{MarPul1993},
  with some non-trivial changes due to the more singular problem.

  \emph{Step 1: The evolution of $c_\epsilon$.}
  As in Corollary~\ref{c:weak},
  \begin{equation}\label{e:cdt}
    \begin{multlined}[.8\linewidth]
      \frac{d c_\epsilon}{dt}
        = \int_{\R^2} x \partial_t \theta_\epsilon\,dx
        = -\int_{\R^2}x\Div((u_\epsilon+F_\epsilon)\theta_\epsilon)\,dx\\
        = \int_{\R^2}\theta_\epsilon(u_\epsilon+F_\epsilon)\,dx
        = \int_{\R^2}\theta_\epsilon F_\epsilon\,dx,
    \end{multlined}
  \end{equation}
  since $u_\epsilon = k_m\star\theta_\epsilon $ and $k_m$ is
  anti-symmetric, so that
  \[
    \int_{\R^2}\theta_\epsilon(t,x) u_\epsilon(t,x) \,dx
      = \int_{\R^2}\int_{\R^2} k_m(x-y)
        \theta_\epsilon(t,x)\theta_\epsilon(t,y)\,dx\,dy
      = 0.
  \]
  Moreover, by an elementary computation
  $|c_\epsilon(0) - x_0|\lesssim\epsilon$.

  \emph{Step 2: The evolution of the moment of inertia.}
  Set
  \[
    J_\epsilon(t)
      \eqdef\int_{\R^2} |x-c_\epsilon(t)|^2 \theta_\epsilon(t,x)\,dx
      = \int_{\R^2} |x|^2\theta_\epsilon(t,x)\,dx - c_\epsilon(t)^2.
  \]
  By our assumptions, $J_\epsilon(0)\lesssim\epsilon^2$.
  Then, as in Corollary~\ref{c:weak}, by \eqref{e:cdt},
  \[
    \begin{multlined}[.9\linewidth]
      \frac{d J_\epsilon(t)}{dt}
        = \int_{\R^2}|x|^2\partial_t\theta_\epsilon\,dx
          - 2c_\epsilon\dot c_\epsilon\\
        = - \int_{\R^2} |x|^2\Div(u_\epsilon+F_\epsilon)\theta_\epsilon)\,dx
          - 2c_\epsilon\int_{\R^2}\theta_\epsilon F_\epsilon\,dx\\
        = 2\int_{\R^2}(x-c_\epsilon)\cdot F_\epsilon\theta_\epsilon\,dx
          + 2\int_{\R^2}x\cdot u_\epsilon\theta_\epsilon\,dx\\
        = 2\int_{\R^2}(x-c_\epsilon)\cdot F_\epsilon\theta_\epsilon\,dx.
    \end{multlined}
  \]
  Here $\int_{\R^2}x\cdot u_\epsilon\theta_\epsilon\,dx$ is zero
  since $k_m$ is anti-symmetric, $x\cdot k_m(x)=0$ and,
  by symmetrisation,
  \[
      \int_{\R^2}\theta_\epsilon(t,x)u_\epsilon(t,x)\cdot x\,dx
        = \frac12 \int_{\R^2}\int_{\R^2} (x-y)\cdot k_m(x-y)
          \theta_\epsilon(t,x)\theta_\epsilon(t,y)\,dx\,dy.
  \]
  By the definition of centre of pseudo-vorticity,
  \[
    \begin{multlined}[.9\linewidth]
      \frac{d J_\epsilon(t)}{dt}
        = 2\int_{\R^2}\theta_\epsilon(t,x)(x - c_\epsilon(t))\cdot
           (F_\epsilon(t,x) - F_\epsilon(t, c_\epsilon(t)))\,dx\\
        \leq 2 M \int_{\R^2} |x-c_\epsilon(t)|^2 \theta_\epsilon(t,x)\,dx
        = 2 M J_\epsilon(t),
    \end{multlined}
  \]
  therefore,
  \begin{equation}\label{e:suptJ}
    \sup_{t \in [0,T]} J_\epsilon(t)
      \lesssim \epsilon^2.
  \end{equation}

  \emph{Step 3: Convergence of the centre of pseudo-vorticity.}
  Consider the solution $c$ of \eqref{e:cdynamics}
  and recall \eqref{e:cdt}. Using the conservation
  of $\theta_\epsilon$,
  \[
    \begin{multlined}[.95\linewidth]
      c_\epsilon(t) - c(t)
        = c_\epsilon(0) - c(0) + {}\\
          + \int_0^t \Bigl(F_\epsilon(s,c_\epsilon(s)) - F_\epsilon(s,c(s))\Bigr)\,ds
          + \int_0^t \Bigl(F_\epsilon(s,c(s)) - F(s,c(s))\Bigr)\,ds + {}\\
          + \int_0^t \int_{\R^2}(F_\epsilon(s,x) - F_\epsilon(s,c_\epsilon(s)))
            \theta_\epsilon(s,x)\,dx\,ds,
    \end{multlined}
  \]
  therefore, by \eqref{e:suptJ} and the assumptions on $F_\epsilon$,
  \[
    \begin{aligned}
      |c_\epsilon(t) - c(t)|
        &\lesssim \epsilon
          + M \int_0^t |c_\epsilon(s) - c(s)|\, ds
          + M\int_0^t \int_{\R^2} |x - c_\epsilon(s)|\theta_\epsilon(s,x)\,dx\,ds\\
        &\leq \epsilon
          + M \int_0^t |c_\epsilon(s) - c(s)|\, ds
          + M\int_0^t J_\epsilon(s)^{1/2}\,ds \\
        &\lesssim \epsilon
          + M \int_0^t |c_\epsilon(s) - c(s)|\, ds.
    \end{aligned}
  \]
  By Gronwall's Lemma,
  \[
    \sup_{[0,T]} |c(t) - c_\epsilon(t)|
      \lesssim \epsilon.
  \]

  \emph{Step 4: Convergence of the pseudo-vorticity.}
  Let $\phi$ be a test function. From \eqref{e:suptJ},
  \[
    \begin{multlined}[.95\linewidth]
      \int_{\R^2} \phi(x) \theta_\epsilon(t,x)\,dx - \phi (c_\epsilon(t))
        = \int_{\R^2} \theta_\epsilon (t,x)
          (\phi(x) - \phi(c_\epsilon(t)))\,dx \\
        \leq \|\nabla\phi\|_\infty
          \int_{\R^2} \theta_\epsilon(t,x)|x - c_\epsilon(t)|\,dx
        \leq \|\nabla \phi\|_\infty J_\epsilon(s)^{1/2}
        \lesssim \epsilon.
    \end{multlined}
  \]
  Since by Step 3, $c_\epsilon\longrightarrow c$ uniformly,
  it follows that
  \[
    \sup_{[0,T]}\Bigl|\int_{\R^2}\phi(x)\theta_\epsilon(t,x)\,dx
        - \phi(c(t))\Bigr|
      \lesssim \epsilon.
  \]

  \emph{Step 5: Control of the support of the pseudo-vorticity.}
  We use the idea of \cite{MarPul1993} to prove that
  the amount of pseudo-vorticity crossing the boundary
  of a small ball around $c_\epsilon$ is small. This
  allows to prove that the radial part of the velocity
  is also small and pseudo-vorticity cannot spread out
  away from $c_\epsilon$.

  Let $\delta \in (0,1)$ and consider a radial function
  $\phi_\delta\in C^{\infty}$ such that
  $0 \leq \phi_\delta\leq 1$, and
  \[
    \phi_\delta =
      \begin{cases}
        1, &\quad |x|\leq\delta,\\
        0, &\quad |x|\geq 2\delta,\\
      \end{cases}
        \qquad\
    |\nabla \phi_\delta|
      \lesssim \frac1\delta,
        \qquad
    |D^2\phi_\delta|
      \lesssim \frac1{\delta^2}.
  \]
  Let
  \[
    \mu_{\delta}
      = 1 - \int_{\R^2}\phi_{\delta}(c_\epsilon(t) - x)\theta_\epsilon(t,x)\,dx.
  \]
  We have
  \[
    \frac{d}{dt}\mu_\delta
      = \int_{\R^2}\nabla\phi_\delta(c_\epsilon(t)-x)
          \cdot(u_\epsilon+F_\epsilon)\theta_\epsilon\,dx
        -\int_{\R^2}\dot c_\epsilon(t)\cdot
          \nabla\phi_\delta(c_\epsilon(t)-x)\theta_\epsilon\,dx
      \qedef \memo{1} + \memo{2}.
  \]
  By conservation of mass and definition of $u_\epsilon$,
  \[
    \begin{aligned}
      \memo{1}
        &= \int_{\R^2}\int_{\R^2}\nabla\phi_\delta(c_\epsilon(t)-x)
          \cdot k_m(x-y)\theta_\epsilon(t,x)\theta_\epsilon(t,y)\,dx\,dy\\
        &\quad + \int_{\R^2}\int_{\R^2}\nabla\phi_\delta(c_\epsilon(t)-x)
          \cdot F_\epsilon(t,x)\theta_\epsilon(t,x)\theta_\epsilon(t,y)\,dx\,dy,
    \end{aligned}
  \]
  and by~\eqref{e:cdt},
  \[
    \memo{2}
      = -\int_{\R^2}\int_{\R^2} \nabla\phi_\delta(c_\epsilon(t)-x)\cdot F_\epsilon(t,y)
        \theta_\epsilon(t,x)\theta_\epsilon(t,y)\,dy\,dx,
  \]
  so that,
  \[
    \begin{aligned}
      \frac{d}{dt}\mu_\delta
        &= \int_{\R^2}\int_{\R^2}\nabla\phi_\delta(c_\epsilon(t)-x)\cdot (F_\epsilon(t,x)-F_\epsilon(t,y))
          \theta_\epsilon(t,x)\theta_\epsilon(t,y)\,dy\,dx\\
        &\quad + \int_{\R^2}\int_{\R^2} \nabla\phi_\delta(c_\epsilon(t)-x)\cdot k_m(x-y)
          \theta_\epsilon(t,x)\theta_\epsilon(t,y)\,dy\,dx
        \qedef \memo{a} + \memo{b}.
    \end{aligned}
  \]
  To estimate \memo{a} and \memo{b} we set some positions and give some
  useful inequalities. Let
  \begin{equation}\label{e:mdeltadef}
    m_\delta(t)
      = \int_{B_\delta(c_\epsilon(t))^c}\theta_\epsilon(t,x)\,dx
  \end{equation}
  be the amount of pseudo-vorticity outside the ball of radius
  $\delta$ centred at $c_\epsilon(t)$, and notice that
  \begin{equation}\label{e:amount2}
    m_\delta(t)
      \leq\frac1{\delta^2}\int_{B_\delta(c_\epsilon(t))^c}
        |c_\epsilon(t)-x|^2\theta_\epsilon(t,x)\,dx
      = \frac1{\delta^2}J_\epsilon(t)
      \lesssim\frac{\epsilon^2}{\delta^2}.
  \end{equation}
  Let us start with the estimate of \memo{a} and split the integral
  in $y$ into an integral over $B_\delta(c_\epsilon(t))^c$
  and an integral over $B_\delta(c_\epsilon(t))$,
  \[
    \memo{a}|_{B_\delta(c_\epsilon(t))^c}
      \leq 2\|F_\epsilon\|_\infty\int\int_{B_\delta(c_\epsilon(t))^c}
        |\nabla\phi_\delta(c_\epsilon(t)-x)|
        \theta_\epsilon(t,x)\theta_\epsilon(t,y)\,dy\,dx
      \lesssim \frac1\delta m_\delta(t)^2,
  \]
  since $\nabla\phi_\delta(c_\epsilon(t)-x)=0$ if
  $|c_\epsilon(t)-x|\leq\delta$, and
  \[
    \memo{a}|_{B_\delta(c_\epsilon(t))}
      \leq M\int_{B_{2\delta}(c_\epsilon(t))}\int_{B_{\delta}(c_\epsilon(t))}
        \frac{|x-y|}\delta \theta_\epsilon(t,x)\theta_\epsilon(t,y)\,dy\,dx
      \lesssim m_\delta(t),
  \]
  since $\nabla\phi_\delta(c_\epsilon(t)-x)\neq0$ only if
  $\delta<|c_\epsilon(t)-x|<2\delta$.
  In conclusion
  \[
    \memo{a}
      \lesssim m_\delta(t) + \frac1\delta m_\delta(t)^2.
  \]
  We turn to the analysis of \memo{b}. Since the integrand is zero
  when $x\in B_\delta(c_\epsilon(t))$, it is sufficient to consider
  the integral only over $E_1\cup E_2$, where
  \[
    \begin{aligned}
      E_1
        &= B_\delta(c_\epsilon(t))^c \times B_{\delta'}(c_\epsilon(t)),\\
      E_2
        &= B_\delta(c_\epsilon(t))^c \times B_{\delta'}(c_\epsilon(t))^c
          \cup B_{\delta'}(c_\epsilon(t))^c \times B_\delta(c_\epsilon(t))^c,
    \end{aligned}
  \]
  and $\delta'=\delta^5$. Denote for brevity by \memo{b.E_1} and
  \memo{b.E_2} the parts of the integral of \memo{b} over $E_1$
  and $E_2$. Since $\phi_\delta$ is radial, we have that
  $\nabla\phi_\delta(x)\cdot k_m(x)=0$, therefore,
  \[
    \begin{aligned}
      \memo{b.E_1}
        &= \iint_{E_1}(k_m(x-y)-k_m(x-c_\epsilon(t)))\cdot
          \nabla\phi_\delta(c_\epsilon(t)-x)
          \theta_\epsilon(t,x)\theta_\epsilon(t,y)\,dy\,dx\\
        &\lesssim\frac1\delta\iint_{E_1}\frac{|y-c_\epsilon(t)|}{\delta^{4-m}}
          \theta_\epsilon(t,x)\theta_\epsilon(t,y)\,dy\,dx\\
        &\lesssim \delta^m m_\delta(t).
    \end{aligned}
  \]
  Here we have used conservation of mass and
  \begin{equation}\label{e:Kstima}
    |k_m(x) - k_m(y)|
      \lesssim \frac{|x-y|}{\delta^{4-m}},
        \qquad |x|,|y|\gtrsim\delta.
  \end{equation}
  We turn to \memo{b.E_2}. The domain $E_2$ is symmetric in $x,y$,
  therefore by symmetrisation, and using that $k_m$ is
  anti-symmetric,
  \[
    \begin{aligned}
      \memo{b.E2}
        &= \frac12\iint_{E_2} k_m(x-y)\cdot
          (\nabla\phi_\delta(c_\epsilon(t)-x)-\nabla\phi_\delta(c_\epsilon(t)-y))
          \theta_\epsilon(t,x)\theta_\epsilon(t,y)\,dy\,dx\\
        &\lesssim\frac1{\delta^2}\iint_{E_2}|x-y|^{m-2}
          \theta_\epsilon(t,x)\theta_\epsilon(t,y)\,dy\,dx\\
        &=\frac2{\delta^2}\int_{B_\delta(c_\epsilon(t)^c)}\theta_\epsilon(t,x)
          \Bigl(\int_{B_{\delta'}(c_\epsilon(t)^c)}
          |x-y|^{m-2}\theta_\epsilon(t,y)\,dy\Bigr)\,dx.
    \end{aligned}
  \]
  By definition \eqref{e:mdeltadef} and since
  $\theta_\epsilon(t,y)\leq\|\theta_\epsilon(0)\|_\infty\sim\epsilon^{-2}$,
  by the Hardy-Littlewood-Sobolev
  inequality,
  \[
    \int_{B_{\delta'}(c_\epsilon(t)^c)}|x-y|^{m-2}\theta_\epsilon(t,y)\,dy
      \lesssim \epsilon^{-2(1-m/2)}m_{\delta'}(t)^{m/2},
  \]
  therefore
  \[
    \memo{b.E_2}
      \lesssim \frac{\epsilon^{m-2}}{\delta^2}m_{\delta'}(t)^{m/2}m_\delta(t).
  \]
  In conclusion
  \[
    \memo{b}
      \lesssim \delta^m m_\delta(t)
        + \frac{\epsilon^{m-2}}{\delta^2}m_{\delta'}(t)^{m/2}m_\delta(t),
  \]
  and, using \eqref{e:amount2},
  \[
    \begin{multlined}[.9\linewidth]
      \frac{d}{dt}\mu_\delta(t)
        \lesssim m_\delta(t) + \frac1\delta m_\delta(t)^2 + \delta^m m_\delta(t)
          + \frac{\epsilon^{m-2}}{\delta^2}m_{\delta'}(t)^{m/2}m_\delta(t)\\
        \lesssim m_\delta(t) + \frac{\epsilon^4}{\delta^5}
          + \frac{\epsilon^{2m}}{\delta^{4+5m}}
        \lesssim m_\delta(t) + \frac{\epsilon^{2m}}{\delta^{14}}.
    \end{multlined}
  \]
  Finally, since $\phi_{\delta/2}\leq\uno_{B_\delta(0)}$
  hence $m_\delta(t)\leq \mu_{\delta/2}(t)$,
  we have that for $t\in[0,T]$,
  \begin{equation}\label{e:mustima}
    \mu_\delta(t)
      \leq \mu_\delta(0) + cT\frac{\epsilon^{2m}}{\delta^{14}}
        + c\int_0^t \mu_{\delta/2}(s)\,ds,
  \end{equation}
  where $c$ is a number that does not depend on the parameters
  $\epsilon,\delta,T$. By the assumptions on the initial condition,
  $\mu_\delta(0)=0$ as long as $\delta\geq\epsilon$.

  Fix now $\epsilon,\delta$ and choose
  $k\sim\log\epsilon^{-1}$ so that $\delta\geq\epsilon 2^k$.
  We can iterate inequality~\eqref{e:mustima} for
  $2^{-k}\delta$, $2^{-k+1}\delta$, \ldots, $2^{-1}\delta$, $\delta$
  to obtain (recall that $\mu_\delta\leq1$)
  \[
    \mu_\delta(t)
      \leq \frac{(cT)^k}{k!} + T\sum_{j=0}^{k-1}
        \frac{(cT)^j}{j!}\frac{\epsilon^{2m}}{(2^{-j}\delta)^{14}}
      \leq \frac{(cT)^k}{k!}
        + T\frac{\epsilon^{2m}}{\delta^{14}}\e^{2^{14}cT}.
  \]
  If we choose $\delta=\epsilon^a$ with $a$ sufficiently small
  and $k/(log\epsilon^{-1})$ sufficiently small, we can deduce
  that
  \[
    \mu_\delta(t)
      \leq c\epsilon^{2m-14a},
  \]
  with $c=c(T,a)$.

  \emph{Step 6: control of the velocity and conclusion.}
  Let us compute the velocity
  outside the disc $D_2=B_{\epsilon^{a/4}}(c_\epsilon(t))$ centred at
  $c_\epsilon(t)$ and with radius $\epsilon^{a/4}$. To this end
  let $D_1=B_{\epsilon^a}(c_\epsilon(t))$.
  Fix $x\in \overline{D_2}^c$ and let $\vec{n}$ be the unit vector in
  the direction $c_\epsilon(t)-x$. The velocity at $x$ can be decomposed
  in three components: the first, $u_{\epsilon1}$, corresponds to the
  contribution of the pseudo-vorticity in $D_1$, the second, $u_{\epsilon2}$,
  corresponds to the contribution of the vorticity in $D_1^c$,
  and the third, $u_{\epsilon3}$, due to the external field.
  Since $\vec{n}\cdot k_m(x-c_\epsilon(t))=0$, by \eqref{e:Kstima},
  \[
    \begin{multlined}[.9\linewidth]
      |u_{\epsilon1}(t,x)\cdot\vec{n}|
        = \Bigl|\int_{D_1}\vec{n}\cdot(k_m(x-y)-k_m(x-c_\epsilon(t)))
          \theta_\epsilon(t,y)\,dy\Bigr|\lesssim\\
        \lesssim \frac{\epsilon^a}{\epsilon^{\frac14a(4-m)}}
          \int_{D_1}\theta_\epsilon(t,y)\,dy
        \leq \epsilon^{\frac14am}.
    \end{multlined}
  \]
  Moreover, by the Hardy-Littlewood-Sobolev inequality,
  since $\theta_\epsilon(t,y)\lesssim\epsilon^{-2}$
  and
  \[
    \int_{D_1^c}\theta_\epsilon(t,y)\,dy
      = m_{\epsilon^a}(t)
      \lesssim \epsilon^{2m-14a},
  \]
  we have that,
  \[
    \begin{multlined}[.9\linewidth]
      |u_{\epsilon2}(t,x)\cdot\vec{n}|
        \leq \int_{D_1^c}\frac1{|x-y|^{3-m}}\theta_\epsilon(t,y)\,dy\\
        \lesssim (\epsilon^{-2})^{\frac12(3-m)}
          (\epsilon^{2m-14a})^{\frac12(m-1)}
        = \epsilon^{m^2-3-7a(m-1)}.
    \end{multlined}
  \]
  We notice that the condition $m>\sqrt 3$ is only necessary here
  to ensure that $u_{\epsilon2}(t,x)$ is small.
  Finally, the velocity due to the external field
  is small by assumption.

  In conclusion the velocity outside $D_2$ is arbitrarily small,
  so in  a finite time $T$ particles cannot go too far away from
  $c_\epsilon(t)$ and thus are contained in a ball around
  $c_\epsilon(t)$ with radius independent on
  $\epsilon\leq\epsilon_0$ (but dependent on $T$).
\end{proof}
\subsection{Proof of Theorem~\ref{t:main}}

Given $N\geq2$ and $T>0$, fix intensities
$\gamma_1,\gamma_2,\dots,\gamma_N$
 and
initial vortex positions $x_1,x_2,\dots,x_N$,
so that the vortex motion \eqref{e:motion}
has a solution in $[0,T]$ without
collisions. Therefore the number
\[
  D
    = \min_{t\in[0,T],i\neq j}|X_i(t) - X_j(t)|,
\]
is positive. Consider $\epsilon\ll D$, so that
the initial blobs
$(\theta_{0,j}^\epsilon)_{j=1,2,\dots,N}$ are
disjoint (and separated
by a distance comparable
with $D$). We first consider the regularized
equation \eqref{e:ainviscid} with regularisation
size $\delta$ also much smaller
than $D$, so that
the regularisations of the initial blobs
are still separated by a distance comparable with
$D$. We denote
by $\tilde\theta_\epsilon^\delta$
the solution to the regularized problem
(here $\epsilon$ refers to the parameter in the
initial condition
and $\delta$ to the regularisation
size).

It is not difficult to see that the supports
of the regularized blobs
remain disjoint
(but not necessarily localized) using the
diffeomorphisms
\eqref{e:characteristic}
introduced in the
proof of Proposition~\ref{p:ainviscid}.
Therefore we can single out the evolutions
$(\tilde\theta_{\epsilon,j}^\delta)_{j=1,2,\dots,N}$
such that
 $\tilde\theta_{\epsilon,j}^\delta(0)$
is the regularisation of
$\theta_{0,j}^\epsilon$,
to obtain for $j=1,2,\dots,N$,
\[
  \begin{cases}
    \partial_t\tilde\theta_{\epsilon,j}^\delta
        + \Div((\tilde u_{\epsilon,j}^\delta
        + \tilde F_{\epsilon,j}^\delta)
        \tilde\theta_{\epsilon,j}^\delta)
      = 0,\\
    \tilde u_{\epsilon,j}^\delta
      = k_m^\delta\star\tilde\theta_{\epsilon,j}^\delta,\\
    \tilde F_{\epsilon,j}^\delta
      = \sum_{\ell\neq j}\tilde u_{\epsilon,\ell}^\delta.
  \end{cases}
\]
Proposition~\ref{p:singleblob} obviously holds also
for the regularized system above and shows that
the blobs stay localized. In particular, if
for each $j$ we localize, at each time, the velocity
field $\tilde F_{\epsilon,j}^\delta$ generated by
all other blobs around the support of
$\tilde\theta_{\epsilon,j}^\delta$,
again with a size much smaller than
$D$, the evolution of each blob
is unchanged.

In the limit as $\delta\to0$, we obtain that the
evolution of vortex blobs can be singled out to
\[
  \begin{cases}
    \partial_t\theta_{\epsilon,j}
        + \Div((u_{\epsilon,j}
        + F_{\epsilon,j})
        \theta_{\epsilon,j})
      = 0,\\
    u_{\epsilon,j}
      = k_m\star\theta_{\epsilon,j},\\
    F_{\epsilon,j}
      = \sum_{\ell\neq j}u_{\epsilon,\ell}.
\end{cases}
\]
for each $j=1,2,\dots,N$. We can finally apply
Proposition~\ref{p:singleblob} to each blob,
and this finally concludes the proof of the theorem.
\bibliographystyle{amsalpha}

\end{document}